\numberwithin{equation}{section}
\newtheorem{Theorem}{Theorem}[section]
\newtheorem{Lemma}[Theorem]{Lemma}
\newtheorem{Proposition}[Theorem]{Proposition}
\newtheorem{Remark}[Theorem]{Remark}
\numberwithin{equation}{section}
\def\bbr{{\mathbb R}}
\def\bbn{{\mathbb{N}}}
\newcommand{\mi}{\mathrm{i}}
\newcommand{\bx}{\mathbf{x}}
\newcommand{\coma}{\; , \;}
\newcommand{\dx}[1]{\, \mathrm{d} #1}
\title[Generating  field concentration  via surface transmission resonance]{Generating customized field concentration via virtual surface transmission resonance}
\author{Yueguang Hu}
\address{Department of Mathematics, City University of Hong Kong, Kowloon, Hong Kong, China}
\email{yueghu2-c@my.cityu.edu.hk}
\author{Hongyu Liu}
\address{Department of Mathematics, City University of Hong Kong, Kowloon, Hong Kong, China}
\email{hongyu.liuip@gmail.com, hongyliu@cityu.edu.hk}
\author{Xianchao Wang}
\address{School of Mathematics, Harbin Institute of Technology, Harbin, China}
\email{xcwang90@gmail.com}
\author{Deyue Zhang}
\address{School of Mathematics, Jilin University,  Changchun,  China}
\email{dyzhang@jlu.edu.cn}
\begin{document}

\begin{abstract}
In this paper, we develop a mathematical framework for generating strong customized field concentration locally around the inhomogeneous medium inclusion via surface transmission resonance.
The purpose of this paper is twofold.
Firstly, we show that for a given inclusion embedded in an otherwise uniformly homogeneous background space, we can design an incident field  to generate strong localized field concentration  at any specified places around the inclusion.  The aforementioned customized field concentration is crucially reliant on the peculiar  spectral and geometric patterns of certain transmission eigenfunctions.
Secondly, we prove the existence of a sequence of transmission eigenfunctions for a specific wavenumber and they exhibit distinct surface resonant behaviors, accompanying  strong surface-localization and surface-oscillation properties. These eigenfunctions as the surface transmission resonant modes fulfill the requirement for generating the field concentration.

\end{abstract}
\maketitle

\medskip
	
\noindent{\bf Keywords:}~~ field concentration, transmission resonance,  surface localization, Herglotz approximation, oscillating behaviors

\noindent{\bf 2010 Mathematics Subject Classification:}~~35P25, 35R30

\section{Introduction}

\subsection{Statement of main results}
In this paper, we are concerned with artificially generating field concentration via surface transmission resonance. To begin with, we present the mathematical formulation of the transmission problem and discuss the major findings mainly focusing on the mathematics.

We consider the time-harmonic wave scattering problem arising from  an incident field $u^i$ and a bounded inhomogeneous medium inclusion $(\Omega; \sigma,\tau)$ in $\bbr^N,\,  N=2,\, 3$, which is the mathematical model of our study.
Here, $\Omega$ denotes the  support of the inhomogeneity of the medium  with a connected complement in $\bbr^N$ and   $\sigma,\tau$ are the related material parameters. By choosing appropriate physical units, we may assume that the material parameters by normalization are set as
\begin{equation*}
\tilde{\sigma}(\bx) = \chi_{\bbr^N\backslash \overline{\Omega}}+\sigma(\bx)\chi_{\Omega} \coma \tilde{\tau}(\bx) = \chi_{\bbr^N\backslash \overline{\Omega}}+\tau(\bx)\chi_{\Omega} \coma \bx \in \bbr^N ,
\end{equation*}
where $\sigma(\bx)$ and $\tau(\bx)$ are two  $L^\infty$ functions in $\Omega$.  $\chi_A(\bx)$ is the indicator function such that  $\chi_A (\bx) =1$ if $\bx \in A$ and $\chi_A (\bx) =0 $ otherwise.
Let $u^i$ be an entire solution to homogeneous Helmholtz equation
\begin{equation}\label{incident}
\Delta u^i(\mathbf{x}) +k^2u^i(\mathbf{x}) =0  \coma \mathbf{x} \in\bbr^N,
\end{equation}
where $k \in \bbr_+$ represents the (normalized) wavenumber of wave propagation. The incident field $u^i$ impinging on the inclusion $\Omega$ gives rise to the scattered field $u^s$. Let $u:= u^i +u^s$ denotes the total field.
The forward scattering problem is described by the following Helmholtz system,
\begin{equation}\label{sys1}
\begin{cases}
 \displaystyle \nabla \cdot \left(\tilde{\sigma} \nabla u \right)  + k^2 \tilde{\tau} u = 0 &\text{in} \; \bbr^N, \medskip \\
u = u^i +u^s  &\text{in} \; \bbr^N, \medskip\\
\displaystyle \lim\limits_{r \rightarrow \infty} r^\frac{N-1}{2}\left(\partial_r u^s- \mi k  u^s\right) =  0 \coma &r= |\bx|,
\end{cases}
\end{equation}
where $\mathrm{i} = \sqrt{-1}$ is the imaginary unit and $\partial_r u = \hat{\bx} \cdot \nabla u$ with $\hat{\bx}:= \bx / |\bx| \in \mathbb{S}^{N-1}  $.
The last limit in \eqref{sys1} characterizes the outgoing nature of the scattered  field,  known as the Sommerfeld radiation condition.
We also allow the case of absorption, represented by the material parameters
with nonzero imaginary parts. In this paper, we assume
\begin{equation}\label{parameter}
\Im \sigma(\bx) \leqslant 0 \coma \Im \tau(\bx) \geqslant 0 \coma \Re \sigma (\bx) \geqslant \gamma_0 \coma \bx \in \Omega,
\end{equation}
for some positive constant $\gamma_0$.
Then the system \eqref{sys1} is well-posed,  and in particular, there exists a unique solution $u \in H_{loc}^2(\bbr^N)$ \cite{Colton2019}. Furthermore, the total field  has the following asymptotic expansion:
\begin{equation*}
u(\bx) = u^i (\bx) + \frac{e^{\mi kr}}{r^{(N-1)/2}}u_{\infty}(\hat{\bx}) +\mathcal{O}\left(\frac{1}{r^{(N+1)/2}}\right)  \quad \textrm{as} \quad r \rightarrow +\infty,
\end{equation*}
which holds uniformly for all directions $\hat{\bx} \in \mathbb{S}^{N-1}$. The complex-value function $u_\infty$, defined on the unit sphere $\mathbb{S}^{N-1}$, is known as the far field  pattern and encodes the scattering information
caused by the perturbation of the incident field $u^i$ due to the scatterer $(\Omega;\sigma,\tau)$.

In this paper, we are concerned with artificially generating strong field concentration in a customized way. Specifically, we show that by properly choosing an incident field $u^i$, one can make the degree of field concentration, namely $\|\nabla u\|_{L^\infty}$, to be arbitrarily
large at a customized exterior neighborhood around $\partial \Omega$. The main result is presented as follows.

\begin{Theorem}\label{11}
Consider the scattering problem \eqref{sys1} with a $C^{1,1}$ inclusion $\Omega$ in $\bbr^N,N=2,3$. We denote by $\Gamma \subset \partial \Omega$ any subset of the boundary $\partial \Omega$. Then  the gradient of the total field $\nabla u$ can blow up in the following sense: for any given large number $M>0$, there exists an incident field $u^i$ impinging on the inclusion $\Omega$ such that
\begin{equation*}
 \|\nabla u\|_{L^\infty (\Gamma_e(\Omega,\epsilon))} \geqslant M(u^i),
\end{equation*}
where $M$ is only dependent of the incident field $u^i$ and $\Gamma_e(\Omega,\epsilon)= \{x |\; \mathrm{dist}(x,\Gamma) \leqslant \epsilon, x \notin \overline{\Omega} \}$ is the exterior of the inclusion $\Omega$ along $\Gamma$ for any given parameter $\epsilon = o(1)$.
\end{Theorem}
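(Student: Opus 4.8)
The plan is to transplant the surface-resonant behaviour of transmission eigenfunctions into the scattering problem via a Herglotz approximation. The point of departure is the second main result of the paper, which supplies, at a fixed admissible wavenumber $k$, a sequence of transmission eigenpairs $(w_n,v_n)$ that are strongly surface-localized and surface-oscillating near $\partial\Omega$. After normalizing $\|v_n\|_{L^2(\Omega)}=1$, the concentration of $v_n$ into a shrinking boundary layer together with its rapid oscillation inside that layer forces the boundary gradient to diverge, i.e. $\|\nabla v_n\|_{L^\infty(\mathcal{N})}\to\infty$ as $n\to\infty$, where $\mathcal{N}$ is an interior one-sided neighbourhood of $\Gamma$. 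I would isolate this divergence as the quantitative engine of the theorem, in particular recording how fast it grows relative to $n$.

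I would then manufacture the incident field. Each $v_n$ solves the free Helmholtz equation $\Delta v_n+k^2v_n=0$ in $\Omega$, so by the density of Herglotz waves among Helmholtz solutions on $\Omega$ I may select, for a tolerance $\delta_n>0$ to be fixed later, a Herglotz wave
\begin{equation*}
u_n^i(\bx)=\int_{\mathbb{S}^{N-1}}e^{\mi k\bx\cdot d}\,g_n(d)\,\dx{s(d)},\qquad g_n\in L^2(\mathbb{S}^{N-1}),
\end{equation*}
that is close to $v_n$ near $\partial\Omega$. Since $u_n^i$ is an entire solution of \eqref{incident}, it is admissible in \eqref{sys1} and, being smooth across $\partial\Omega$, its gradient cannot jump at the interface; hence the largeness of $\nabla u_n^i$ on the interior side of $\Gamma$ persists to the exterior layer $\Gamma_e(\Omega,\epsilon)$ once $\epsilon=o(1)$ is small enough.

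Finally I would pass to the total field $u_n=u_n^i+u_n^s$. Because $k$ is a transmission eigenvalue and $u_n^i\approx v_n$, the incident field is approximately non-scattering: by the well-posedness and stability of \eqref{sys1} the scattered field $u_n^s$ is small in the exterior neighbourhood of $\partial\Omega$, so that $\nabla u_n=\nabla u_n^i+\nabla u_n^s$ is dominated there by the large term $\nabla u_n^i$. Choosing $n$ large, and $\delta_n$ small enough that the approximation and scattering errors are overwhelmed by the diverging boundary gradient, yields $\|\nabla u_n\|_{L^\infty(\Gamma_e(\Omega,\epsilon))}\geqslant M$ with the admissible incident field $u^i=u_n^i$, which is the assertion.

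The step I expect to be the main obstacle is the transfer across norms in the last two paragraphs: the eigenfunction divergence is naturally quantified in Sobolev ($L^2$/$H^1$) norms and through the Herglotz density, whereas the target is a pointwise $L^\infty$ bound on $\nabla u$. Closing this gap requires either approximating $v_n$ in a norm strong enough to embed into $C^1$, so that $\|\nabla(u_n^i-v_n)\|_{L^\infty}$ is controlled, or exploiting the explicit oscillatory structure of the resonant modes to estimate gradients pointwise, all while keeping the approximate-non-scattering bound on $u_n^s$ uniform as the resonance index $n$ grows. Balancing $\delta_n$ against the growth rate of $\|\nabla v_n\|_{L^\infty}$ so that the blow-up survives is the delicate quantitative heart of the argument.
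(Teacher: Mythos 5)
Your proposal correctly identifies two of the paper's ingredients (Herglotz approximation of a transmission eigenfunction, and the near-vanishing of the scattered field when the incident field is close to such an eigenfunction), but the core construction is different from the paper's and, as stated, has a genuine gap that your own closing paragraph essentially concedes but does not repair.

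First, you place the resonance on $\Omega$ itself: you assume that at the fixed wavenumber $k$ the given inclusion $(\Omega;\sigma,\tau)$ admits a sequence of surface-localized, surface-oscillating transmission eigenpairs $(w_n,v_n)$. Theorem \ref{oscillating} and Lemma \ref{31} do not provide this: they construct, for a \emph{radial} domain $B_{r_0}$, a sequence of \emph{material parameters} $(\sigma_m,\tau_m)_{m\in\bbn}$ for which $k$ is an eigenvalue with surface-localized modes. For a generic $C^{1,1}$ inclusion with prescribed $(\sigma,\tau)$, the fixed $k$ need not be a transmission eigenvalue at all, and certainly does not carry an infinite sequence of increasingly localized eigenfunctions. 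The paper's resolution is to introduce a \emph{virtual} ball $B_{r_0}$ at distance $\epsilon$ from $\Gamma$, equip it with the designed parameters $(\sigma_m,\tau_m)$, and invoke Proposition \ref{transrelation} to obtain a transmission eigenfunction of $\Omega\cup B_{r_0}$ equal to the resonant mode $v_m^l$ on $B_{r_0}$ and identically zero on $\Omega$; the Herglotz wave then approximates \emph{that} function, so it is large on $\partial B_{r_0}$ and nearly zero on $\Omega$.

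Second, the step where "the largeness of $\nabla u_n^i$ on the interior side persists to the exterior layer" and then transfers to $\nabla u_n$ cannot be closed with the available estimates. The Herglotz density (Lemma \ref{appr}) and the scattering stability (Lemma \ref{scattersmall}) are $H^2$ statements, and $H^2\hookrightarrow C^0$ but $H^2\not\hookrightarrow C^1$ in $\bbr^2,\bbr^3$; neither $\|\nabla(u_n^i-v_n)\|_{L^\infty}$ nor $\|\nabla u_n^s\|_{L^\infty}$ is controlled, so the pointwise gradient of the total field cannot be read off from that of $v_n$. The paper avoids pointwise gradient control entirely: it only needs the \emph{values} of the total field (controlled via $H^2\hookrightarrow C^0$) to be of size $\sim m$ at a point of $\partial B_{r_0}$ and of size $\mathcal{O}(\varepsilon)$ on $\partial\Omega$, and then the mean value theorem across the gap of width $\epsilon$ forces $\|\nabla u\|_{L^\infty(\Gamma_e(\Omega,\epsilon))}\gtrsim m/\epsilon$. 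This two-location potential-difference mechanism is the actual engine of Theorem \ref{11}, and it is exactly what your single-domain construction lacks.
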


The primary idea for generating field concentration involves introducing a virtual field concentration generator near the inhomogeneous medium inclusion (see Figure \ref{omega}).
Subsequently, we can design a tailored incident field to activate such a generator and achieve the desired field concentration.
A field concentration generator is referred to as a transmission eigenmode ball where the wave field tends to concentrate around the boundary.
As the activated generator approaches to the inclusion, it produces a significant potential difference in the vicinity around the inclusion that lead to the blowup of gradient of the underlying wave field.
It should be noted that the construction of the incident field $u^i$ is dependent on the distance parameter $\epsilon$ and the geometric location of the boundary part $\Gamma$. Therefore, the dependence of $M$ on the incident field $u^i$ also signifies its dependence on $\epsilon$ and $\Gamma$.
Furthermore, there are no restrictive requirements regarding the occurrence locations of field concentration. In fact, we can achieve field concentration in any specified location (even multiple locations) around the inclusion by introducing field concentration generators over there.

\begin{figure}[htbp!]
\centering
\begin{tikzpicture}
\coordinate  (O) at (0,0);
\filldraw [thick, fill =gray]  plot[smooth, tension=.7] coordinates {(4.4,2.75) (3.3,3.3) (2.3,3.08) (0.88,2.75) (0.55,1.65) (-0.55,0) (0,-2.2)(1.65,-2.75) (4.4,-2.2) (3.85,-0.55) (5.5,1.1) (4.4,2.75)};

\node at (2.5,-1.2) {$\Omega$};
\draw[black,densely dashed]  (4.15,-0.65) circle (0.25);
\draw[black,densely dotted]  (4.15,-0.65) circle (0.2);
\draw[black,densely dotted]  (4.15,-0.65) circle (0.15);
\draw[black,densely dotted]  (4.15,-0.65) circle (0.1);
\draw[black,densely dotted]  (4.15,-0.65) circle (0.05);
\node at (4.5,-1) {$\scriptstyle B$};
\end{tikzpicture}
\caption{Schematic illustration of field concentration generator $B$ near  $\Omega$.}
\label{omega}
\end{figure}
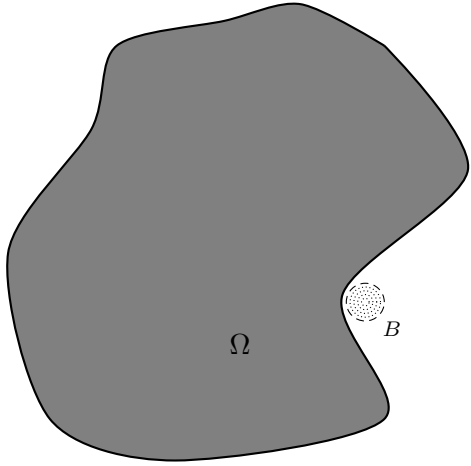

To enhance the clarity of the idea of generating field concentration, we outline the main proof of Theorem \ref{11} in this paper.
Firstly, we establish the existence of a sequence of specific transmission eigenfunctions within a radial domain for an any given wavenumber (refer to Lemma \ref{31}). These transmission eigenfunctions are dependent on the construction of material parameters and as surface waves, they exhibit strong surface resonant properties (refer to Theorem \ref{oscillating}). These properties satisfy the requirements for generating field concentration.
Secondly, the transmission eigenvalue problem \eqref{spectral}  possesses a distinctive spectral pattern as a result of the topological structure of the defined domain (refer to Proposition \ref{transrelation}). Based on this spectral pattern, we can construct the corresponding transmission eigenfunctions.
Thirdly, there exists a Herglotz wave function that can serve as a desirable approximation to the transmission eigenfunction. This Herglotz wave function satisfies the entire Helmholtz equation \eqref{incident} and can be assigned as the incident field (refer to Lemma \ref{appr}).
Finally, by utilizing the nearly vanishing scattering property in the presence of transmission eigenfunctions (refer to Lemma \ref{scattersmall}), we can rigorously prove that the gradient of the total field would blow up at specified locations around the inhomogeneous medium inclusion.

As aforementioned in the abstract, our study critically relies on certain geometric properties of the so-called transmission eigenfunctions, which were recently revealed in \cite{Chow2023, Chow2021}. The transmission eigenvalue problem arises in the study of invisibility/transparency in continuum mechanics and has a very colorful history. For further discussion on this topic, we refer the readers to \cite{Liu2022}.
In this paper, we reexamine the transmission eigenvalue problem \eqref{spectral} from the standpoint of material construction.
We have proved that for an any given wavenumber $k$, there exists a sequence of transmission eigenfunctions which exhibit peculiar surface resonant behaviors, accompanying strong surface-localization and surface-oscillation patterns along with
 the energy blowup. 
The main result is presented as follows.

\begin{Theorem}\label{oscillating}
Consider the transmission eigenvalue problem \eqref{spectral} in a radial domain $B_{r_0}$ centering the origin with radius $r_0$ in $\bbr^N, N=2,3$ and a given constant wavenumber $k$. Then there exists a sequence of transmission eigenfucntions $(v_m,w_m)_{m \in \bbn}$  related to the material parameters $(\sigma_m,\tau_m)_{m \in \bbn}$ such that $k$  is the transmission eigenvalue and the transmission eigenfunctions themselves and their gradients are surface-localized, i.e.
\begin{equation*}
 \lim_{m\rightarrow \infty}\frac{\| \psi_m\|_{L^2(B_{\xi )}}}{\| \psi_m\|_{L^2(B_{r_0})}} =0 \coma
 \lim_{m\rightarrow \infty}\frac{\| \nabla \psi_m\|_{L^2(B_{\xi})}}{\|\nabla \psi_m\|_{L^2(B_{r_0})}} = 0  , \quad \psi_m =w_m, v_m ,
\end{equation*}
where  $B_{\xi}$ is defined as
\begin{equation*}
B_{\xi }= \{x \in B_{r_0}, \mathrm{dist}(x,\partial B_{r_0})\geqslant \xi r_0,\xi \in (0,1)\}.
\end{equation*}
We further suppose $v_m$ is $L^2(B_{r_0})$-normalized, i.e. $\|v_m\|_{L^2(B_{r_0})} =1$, then
\begin{equation*}
\lim_{m\rightarrow \infty} \frac{\|\nabla \psi_m\|_{L^\infty(\Sigma_{\xi )}}}{k} = \infty ,
\end{equation*}
where $\Sigma_{\xi }$ is defined as
\begin{equation*}
\Sigma_{\xi} =
\begin{cases}
\{ (r,\theta) | \xi r_0 < r < r_0, \theta_1 < \theta <\theta_2\} \coma \xi \in (0,1); \theta_1,\theta_2 \in  [0,2\pi) , \\
 \{(r,\theta,\varphi) | \xi r_0 < r < r_0,  0\leqslant \theta \leqslant \pi ,\varphi_1 < \varphi <\varphi_2\} \coma \xi \in (0,1); \varphi_1,\varphi_2 \in [0,2\pi).
\end{cases}
\end{equation*}
\end{Theorem}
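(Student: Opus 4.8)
The plan is to exploit the rotational symmetry of $B_{r_0}$ and reduce everything to the large-order asymptotics of (spherical) Bessel functions. By Lemma~\ref{31}, for the prescribed wavenumber $k$ there is a sequence of transmission eigenpairs $(v_m,w_m)$ attached to the constructed parameters $(\sigma_m,\tau_m)$, and in polar/spherical coordinates these separate as
\begin{equation*}
v_m=\alpha_m J_m(kr)e^{\mi m\theta},\qquad w_m=\beta_m R_m(r)e^{\mi m\theta}\quad(N=2),
\end{equation*}
where $R_m$ is the radial mode furnished by Lemma~\ref{31} (equal to $J_m(k_m r)$ with $k_m^2=k^2\tau_m/\sigma_m$ when the parameters are constant), and with $e^{\mi m\theta}$ replaced by the sectoral harmonic $Y_m^{m}(\hat{\bx})\propto\sin^m\theta\,e^{\mi m\varphi}$ and $J_m,R_m$ by their spherical analogues when $N=3$. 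The interface conditions $v_m=w_m$ and $\sigma_m\partial_\nu w_m=\partial_\nu v_m$ on $\partial B_{r_0}$ fix the ratio $\beta_m/\alpha_m$ and constrain $R_m$. The two structural facts I will use are that both radial factors carry order $m$, and that the oscillatory angular factors have modulus independent of the oscillating variable.

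First I would quantify the radial concentration. Since $k,r_0$ are fixed, for large $m$ one is below the turning point, where $J_m(z)\sim(2\pi m)^{-1/2}(ez/2m)^m$, so $J_m(kr)/J_m(kr_0)$ decays like $(r/r_0)^m$. Consequently, for any fixed $\xi\in(0,1)$ the contribution of $B_\xi=B_{(1-\xi)r_0}$ to $\|v_m\|_{L^2(B_{r_0})}^2$ is smaller than the boundary-layer contribution by a factor $O((1-\xi)^{2m})$; since the angular integrals produce equal, $m$-independent constants in the two regions, this yields
\begin{equation*}
\lim_{m\to\infty}\frac{\|v_m\|_{L^2(B_\xi)}}{\|v_m\|_{L^2(B_{r_0})}}=0 .
\end{equation*}
Applying the same estimate to $\partial_r v_m$ and to the angular derivative $r^{-1}\partial_\theta v_m$ gives the gradient localization, and repeating the computation for $w_m$, whose radial factor $R_m$ is likewise of order $m$ and — by the construction in Lemma~\ref{31} — still lies below its turning point for large $m$, delivers the statements for $\psi_m=w_m$. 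Thus surface-localization holds for both components.

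For the surface-oscillation part I would use the $L^2$-normalization together with the boundary-layer width. The decay $(r/r_0)^m$ confines essentially all the unit mass to a layer of width $\sim r_0/m$, so the normalization $\|v_m\|_{L^2(B_{r_0})}=1$ forces $|\alpha_m|\,|J_m(kr_0)|\sim\sqrt m/r_0$ up to absolute constants. The angular component of $\nabla v_m$ equals $\mi m r^{-1}\alpha_m J_m(kr)e^{\mi m\theta}$, whose modulus at $r=r_0$ is $(m/r_0)|\alpha_m|\,|J_m(kr_0)|\sim m^{3/2}/r_0^{2}$. Choosing a sample point $(r_\ast,\theta_\ast)\in\Sigma_\xi$ with $r_\ast$ inside the boundary layer (and, when $N=3$, $\theta_\ast$ near the equator where $\sin^m\theta$ is of order one) gives $\|\nabla v_m\|_{L^\infty(\Sigma_\xi)}\gtrsim m^{3/2}/r_0^{2}$, hence $\|\nabla v_m\|_{L^\infty(\Sigma_\xi)}/k\to\infty$; the interface relations then transfer the same lower bound to $w_m$.

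The main obstacle is the $L^\infty$ lower bound, which — unlike the $L^2$ localization — demands genuinely pointwise control of the Bessel function and its derivative in the boundary-layer/turning-point region, together with careful bookkeeping to ensure that the amplitude $|\alpha_m|$ does not cancel the factor $m$ supplied by the angular derivative. I would handle this through the uniform (Olver/Debye) asymptotics valid near $r=r_0$ rather than the cruder fixed-argument expansion used for the localization; when $N=3$ one additionally needs the concentration of the sectoral harmonic $Y_m^m$ at the equator so that the sample point genuinely lies in $\Sigma_\xi$ with angular factor of order one. The modulus-one nature of $e^{\mi m\theta}$ (resp.\ $e^{\mi m\varphi}$) is precisely what removes any difficulty with zeros of the oscillatory factor.
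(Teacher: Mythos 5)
Your treatment of $v_m$ and of the $L^\infty$ blow-up follows essentially the paper's route: the below-turning-point asymptotics give the $(r/r_0)^m$ radial decay, the normalization forces $|\alpha_m|\,|J_m(kr_0)|\sim\sqrt{m}/r_0$, and the factor $m$ from the angular derivative yields the $m^{3/2}$ lower bound. Your idea of transferring the $L^\infty$ lower bound to $w_m$ through the Dirichlet interface condition is a legitimate shortcut compared with the paper's explicit computation of the coefficient $\alpha_m$. However, there is a genuine gap in your localization argument for $w_m$: you assert that its radial factor ``still lies below its turning point for large $m$,'' and this is false by the very construction of Lemma \ref{31}. There $n_m$ is chosen so that $kn_mr_0\in(j_{m,s_0},j_{m,s_0+1})$, and since $j_{m,s}>j'_{m,1}>m$, the argument $kn_mr$ crosses the turning point $m$ at a radius strictly inside $B_{r_0}$; near $\partial B_{r_0}$ the profile $J_m(kn_mr)$ is in its oscillatory regime and has $s_0$ interior zeros. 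Consequently the decay $J_m(kn_mr)/J_m(kn_mr_0)\sim(r/r_0)^m$ that you use for $v_m$ is unavailable for $w_m$ --- indeed $kn_mr_0$ sits between two consecutive zeros, so $J_m(kn_mr_0)$ cannot serve as a normalizing amplitude --- and ``repeating the computation'' does not deliver $\lim_{m}\|w_m\|_{L^2(B_\xi)}/\|w_m\|_{L^2(B_{r_0})}=0$, nor the corresponding statement for $\nabla w_m$.

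The repair is what the paper actually does: bound the numerator over the inner region, where the argument of $J_m$ stays below $m$, by the uniform Airy-type asymptotics
\begin{equation*}
\frac{J_m(kn_m\xi r_0)}{J_m(m)}\sim Ai\bigl(m^{2/3}\zeta_x\bigr)\sim e^{-\frac{2}{3}\zeta_x^{3/2}m}m^{-1/6},
\end{equation*}
and bound the denominator from below by the sub-turning-point mass $\int_0^mJ_m^2(r)r\dx{r}$ via the convexity/tangent-line estimate, never invoking the boundary value $J_m(kn_mr_0)$. For $\nabla w_m$ one additionally needs the identity of Lemma \ref{besselintegral} to rewrite $\int J_m'^2\,r+m^2J_m^2/r$ as $\int J_{m-1}^2\,r-mJ_m^2$ before the same comparison applies. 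Your appeal to Olver/Debye asymptotics only in the $L^\infty$ step shows you have the right tool in hand, but it must also be deployed in the $L^2$ localization of $w_m$; as written, that part of the proof would fail. (A minor further point: in $\bbr^3$ you restrict to the sectoral harmonic $Y_m^m$, which suffices for the existence of a sequence but is weaker than the paper's treatment of all $-m\leqslant l\leqslant m$ via the Legendre bounds of Lemma \ref{legendre}.)
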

\begin{Remark}
$B_{\xi }$ and $\Sigma_{\xi}$ represent the interior and boundary domain inside the inclusion respectively. In theorem \ref{oscillating},
the first limit indicates that these transmission eigenfucntions are surface-localized eigenmodes.
The second limit indicates that their oscillating behaviors tend to concentrate along the boundary for both transmission eigenfunctions.
In the vicinity along the boundary, The last limit indicates that the oscillating frequencies of transmission eigenfunctions are much larger than the natural frequency $k$, especially for sufficiently large order $m$.
\end{Remark}

\subsection{Background discussion and scientific novelties}

In the field of fluid mechanics, a field concentration refers to a location within an object where the gradient of velocity potential or pressure is substantially higher than the surrounding region in the fluid field.
Excessively large velocity gradient can lead to serious consequences, including flow instability and structural damage.
Nevertheless, the destructive nature of field concentration can be harnessed for practical applications.
Proper utilization of field concentration offers engineers numerous possibilities to analyze and optimize the performance of fluid systems.
Therefore, comprehending field concentrations is fundamental in fluid mechanics and plays a vital role in designing and optimizing various engineering applications.

Field concentration can generally be utilized to perform destructive tasks in engineering.
In medical applications, such as treating cancers (e.g., prostate cancer, liver tumors) and non-cancerous conditions (e.g., uterine fibroids), a transducer focuses acoustic waves on specific points within the body to create high-intensity regions where the acoustic energy concentrates and lead to thermal ablation of the target tissue. The concentrated acoustic field can generate localized energy concentration capable of destroying pathological tissues, including tumors.
Similarly, in the treatment of kidney stones and gallstones, lithotripsy involves generating a series of acoustic waves outside the body, concentrating acoustic energy at the location of the stone and render it fragile. Subsequently, field concentration can fragment kidney stones or gallstones into smaller pieces that can be naturally expelled by the body.
On the other hand, field concentration can also enhance the performance of fluid systems.
In the design of aircraft lifting surfaces, the low-pressure region on the upper surface of the airfoil and the corresponding pressure gradients are utilized to generate the required lift. Optimizing the distribution of field concentration can improve lift and reduce drag.
Nozzles and propulsion systems also exemplify this application. The design of rocket engines and jet engines relies on the extremely high velocity potential gradient and pressure gradient in the exhaust flow to generate thrust. Through meticulous nozzle system design, the velocity potential gradients can be optimized to enhance the propulsive efficiency.

As mentioned earlier, we are concerned with artificially creating strong field concentrations in a customized way.  The field concentration can occur at a customized place around a generic material inclusion or within the material inclusion if it possesses a certain specific geometric structure.
There are several novel and salient features of our study. First, the field concentration has been widely investigated in the literature, say e.g. \cite{Yun2009,Ammari2007,Dong2021,Bao2009} in the high-contrast composite material theory. It turns out that the material and geometric irregularities are crucial for the occurrence of field concentration. In fact, it is usually required that the material parameters of the inclusion  and those of background medium possess a certain asymptotically high contrast. Moreover, it usually occurs between two close-to-touching convex material inclusions, corresponding to the building blocks of the underlying composite material. The study is usually concerned with the static or quasi-static case, namely $\omega = 0$ or $w \cdot \mathrm{diam}(\Omega) \ll 1$ see \cite{Deng2022,Deng2022a,Ammari2020} for discussion on the related developments in the literature. However, for our study, there are none of such restrictive requirements: there can be only one material inclusion with regular material parameters of generic geometric shape, and the field concentration can occur in the quasi-static regime or beyond the quasi-static regime. Second, a large field concentration may cause the failure of the material structure. Many preventive methods have been developed to counter the destructive effect of field concentration. To our best knowledge, there is little study on artificially and customarily generating strong field concentration within a material structure. As is well known, everything might have many different sides. The destructive nature of field concentration can be put into good use. For example, in the treatment for kidney stones and gallstones, a destructive failure of the  stone structure inside can make the treatment easier and safer and reduce the need for surgical intervention. Finally, we would like to mention that gradient estimates of solutions is a central topic in the theory of partial differential equations (PDEs); see e.g. \cite{li2000gradient}. Our study provides a good example that for wave equations, the gradient blowup phenomena may be even induced for (possibly) smooth coefficients due to frequency effect.

The rest of the paper is organized as follows.
Section 2 presents the spectral and geometric patterns of surface transmission resonance and provides the proof for the existence of a sequence of surface transmission resonant eigenfunctions associated with the given wavenumber. These eigenfunctions are utilized to construct transmission resonant balls, which facilitate the generation of strong field concentration.
Section 3 demonstrates the artificial generation of customized field concentration via the spectral and geometric patterns of transmission resonance. It is followed by the illustration of this phenomenon through numerical experiments in Section 4.
Section 5 focuses on the surface-oscillating behaviors of transmission eigenfunctions and concludes the complete proof of Theorem \ref{oscillating}.

\section{Surface transmission resonance}\label{section2}
This section introduces the transmission eigenvalue problem along with its spectral patterns, and subsequently establishes the existence of a sequence of surface transmission resonant eigenfunctions.

\subsection{Transmission eigenvalue problem and its spectral patterns}
The transmission eigenvalue problem is associated with  invisibility in  the wave scattering system \eqref{sys1}. When invisibility occurs, there is no perturbation outside caused by the inhomogeneous medium inclusion,  i.e., $u^s\equiv 0$. This reduction leads to the following (interior) transmission eigenvalue problem:
\begin{equation}\label{spectral}
\begin{cases}
\displaystyle \nabla \cdot \left(\sigma\nabla w \right)  +k^2\tau w = 0 &\text{in} \; B, \medskip \\
\displaystyle \Delta  v +k^2v = 0 &\text{in} \  B, \medskip\\
\displaystyle w =v \coma \sigma\frac{\partial w}{\partial \nu} = \frac{\partial v}{\partial\nu} &\text{on} \ \partial B,
\end{cases}
\end{equation}
where $\nu \in \mathcal{S}^{N-1}$ is the exterior unit normal to $\partial B$ and $B$ is a bounded Lipschitz domain in $\bbr^N$.
In order to make a distinction between the inhomogeneous medium inclusion introduced in \eqref{sys1} and the definition domain  introduced in \eqref{spectral}, we introduce the symbol $B$ to denote the definition domain for transmission eigenvalue problem. By slight abuse of notation, $B_r(\bx_0)$ represents a radial domain with radius $r$ and center $\bx_0$ in $\bbr^N$  that should be evident from the context.. The center $\bx_0$ is omitted when it is the origin.
Additionally, we introduce an auxiliary parameter to facilitate our analysis:
\begin{equation}\label{nn}
n =\sqrt{\frac{\tau}{\sigma}}.
\end{equation}
It is evident that  $w = v = 0$ forms  a pair of trivial solutions to \eqref{spectral}.
If the system \eqref{spectral} admits a pair of nontrivial solutions $(v,w)$, then $k$ is referred to as a transmission eigenvalue, and $(v,w)$ are the corresponding transmission eigenfunctions. The transmission eigenfunction in fact can be viewed as the restriction of the wave inside the inhomogeneous medium inclusion. The behaviors of transmission eigenfunction reflect the geometric properties of  the wave probing on the invisible inclusion. This can help to understand the mathematical nature of invisibility.
In this paper, we find out there exists a sequence of surface-localized transmission eigenfunctions $(v_m,w_m)_{m \in \bbn}$. Here, the localization  means that there exists a sufficiently small number $\varepsilon =o(1)$ such that
\begin{equation*}
\frac{\|\psi\|_{L^2(B_\varepsilon)}}{\|\psi\|_{L^2(B)}} = o(1) , \quad  \psi = w \; \textrm{and} \; v.
\end{equation*}
where $B_\varepsilon:= \{x \in B, \textrm{dist}(x,\partial B)\geqslant \varepsilon\}.$
If localization occurs, the corresponding transmission eigenfunctions are called surface-localized transmission eigenmodes.

Next we shall consider the spectral pattern of transmission eigenvalue problem.  Let $\Omega$ be composed of multiple simply-connected components as follows:
\begin{equation*}
(\Omega;\sigma,\tau) = \mathop{\cup}_{j=1}^{L_0}(\Omega_j;\sigma_j,\tau_j),
\end{equation*}
where $\Omega_j$'s are mutually disjoint and $(\Omega_j;\sigma_j,\tau_j)$ are the restriction of $(\Omega;\sigma,\tau)$ on $\Omega_j$.
The following theorem states that the set of all transmission eigenvalues in $\Omega$ is in fact the union of the eigenvalues in each component $\Omega_j$. Moreover, the transmission eigenfunctions in $\Omega$ are linear combinations of the eigenfunctions in each component $\Omega_j$ with the trivial eigenfunctions in other components.

\begin{Proposition}\label{transrelation}
Let $\boldsymbol{\sigma} [\Omega;\sigma,\tau]$ and $\boldsymbol{\sigma}[\Omega_j;\sigma_j,\tau_j] $ be the set of transmission eigenvalues of the system \eqref{spectral} in $(\Omega;\sigma,\tau)$ and  $(\Omega_j;\sigma_j,\tau_j), 1\leqslant j \leqslant L_0$,  where $(\Omega;\sigma,\tau) = \mathop{\cup}_{j=1}^{L_0}(\Omega_j;\sigma_j,\tau_j)$. Then
\begin{equation} \label{thm}
\boldsymbol{\sigma} [\Omega;\sigma,\tau] = \mathop{\cup}\limits_{j=1}^{L_0}\boldsymbol{\sigma}[\Omega_j;\sigma_j,\tau_j].
\end{equation}
\end{Proposition}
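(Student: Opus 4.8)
The plan is to exploit that $\Omega=\cup_{j=1}^{L_0}\Omega_j$ is a disjoint union of its (separated) components, so that the transmission system \eqref{spectral} decouples completely across the $\Omega_j$. Since the components are mutually disjoint, any admissible pair $(v,w)$ on $\Omega$ is just a tuple of restrictions $(v|_{\Omega_j},w|_{\Omega_j})$, the boundary $\partial\Omega$ is the disjoint union of the $\partial\Omega_j$, and the two equations together with the transmission conditions $w=v$, $\sigma\,\partial_\nu w=\partial_\nu v$ are all \emph{local}. Hence each of them holds on $\Omega$ if and only if it holds on every $\Omega_j$ separately, which reduces \eqref{thm} to a pair of set inclusions.

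For $\cup_{j}\boldsymbol{\sigma}[\Omega_j;\sigma_j,\tau_j]\subseteq \boldsymbol{\sigma}[\Omega;\sigma,\tau]$, I would take $k\in\boldsymbol{\sigma}[\Omega_{j_0};\sigma_{j_0},\tau_{j_0}]$ with a nontrivial eigenpair $(v_{j_0},w_{j_0})$ on $\Omega_{j_0}$, and define $(v,w)$ on $\Omega$ to equal $(v_{j_0},w_{j_0})$ on $\Omega_{j_0}$ and $(0,0)$ on every other component. Because $0$ solves both the Helmholtz equation $\Delta v+k^2 v=0$ and the divergence-form equation $\nabla\cdot(\sigma\nabla w)+k^2\tau w=0$, and the transmission conditions reduce to $0=0$ on the boundaries $\partial\Omega_j$ with $j\neq j_0$, the zero-extended pair solves \eqref{spectral} on $\Omega$ with the prescribed parameters (which restrict to $(\sigma_{j_0},\tau_{j_0})$ on $\Omega_{j_0}$). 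As $(v_{j_0},w_{j_0})\not\equiv(0,0)$, the extension is nontrivial, so $k\in\boldsymbol{\sigma}[\Omega;\sigma,\tau]$.

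For the reverse inclusion, I would take $k\in\boldsymbol{\sigma}[\Omega;\sigma,\tau]$ with nontrivial $(v,w)$ and restrict it to each component, $(v_j,w_j):=(v|_{\Omega_j},w|_{\Omega_j})$. Locality of \eqref{spectral} shows every $(v_j,w_j)$ solves the transmission system on $\Omega_j$ with parameters $(\sigma_j,\tau_j)$. Nontriviality means $(v,w)$ does not vanish identically on $\Omega$; since the $\Omega_j$ exhaust $\Omega$, it must be nonzero on at least one component $\Omega_{j_1}$, whence $(v_{j_1},w_{j_1})\not\equiv(0,0)$ and $k\in\boldsymbol{\sigma}[\Omega_{j_1};\sigma_{j_1},\tau_{j_1}]$. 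Combining the two inclusions yields \eqref{thm}.

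The only delicate point I anticipate is the bookkeeping at the level of function spaces together with the precise notion of ``nontrivial.'' I would record that, because the components are separated, $H^2(\Omega)=\bigoplus_{j}H^2(\Omega_j)$ (and similarly for the trace spaces on $\partial\Omega$), so both restriction and zero-extension are bounded and preserve the regularity demanded by \eqref{spectral}; and I would fix the convention that a solution is nontrivial precisely when $(v,w)\neq(0,0)$, so that a component pair with, say, $v_{j}\equiv 0$ but $w_{j}\not\equiv 0$ still counts as a genuine eigenpair on $\Omega_j$. With these conventions the decoupling is exact, and no unique-continuation or spectral-multiplicity considerations are needed.
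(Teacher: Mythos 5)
Your proposal is correct and follows essentially the same route as the paper: zero-extension of a component eigenpair for one inclusion, and restriction to a component where the pair is nontrivial for the other. Your extra care about the meaning of ``nontrivial'' (requiring only $(v,w)\neq(0,0)$ on some component rather than both functions being nonzero there) is a slight refinement of the paper's wording, but the argument is the same.
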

\begin{proof}
We first prove that
\begin{equation}\label{eq:p1}
\mathop{\cup}_{j=1}^{L_0} \boldsymbol{\sigma}[\Omega_j;\sigma_j,\tau_j]\subset \boldsymbol{\sigma} [\Omega;\sigma,\tau].
\end{equation}
Without loss of generality, we consider $k\in  \boldsymbol{\sigma}[\Omega_1;\sigma_1,\tau_1]$ with the associated transmission eigenfunctions $w_1\in H^1(\Omega_1)$ and $v_1\in H^1(\Omega_1)$, that is,
\begin{equation}\label{eq:s3}
\begin{cases}
\nabla \cdot (\sigma_1\nabla w_1) +k^2 \tau_1 w_1 = 0 &\text{in} \quad \Omega_1, \\
\Delta v_1 +k^2v_1 = 0 &\text{in} \quad \Omega_1, \\
w_1 =v_1 \coma \sigma_1\frac{\partial w_1}{\partial \nu} = \frac{\partial v_1}{\partial\nu} &\text{on} \quad\partial \Omega_1,
\end{cases}
\end{equation}
Next, it is trivially seen that $w_j=v_j=0$, $j=2, 3,\ldots, L_0$, satisfy
\begin{equation}\label{eq:s4}
\begin{cases}
\nabla \cdot (\sigma_j\nabla w_j) +k^2 \tau_j w_j = 0  &\text{in} \quad \Omega_j, \\
\Delta v_j +k^2v_j = 0 &\text{in} \quad \Omega_j, \\
w_j =v_j \coma \sigma_j\frac{\partial w_j}{\partial \nu} = \frac{\partial v_j}{\partial\nu} &\text{on} \quad\partial \Omega_j,
\end{cases}
\end{equation}
Set
\begin{equation}\label{eq:p2}
\begin{split}
w=& w_1\chi_{\Omega_1}+0\cdot\chi_{\Omega_2}+\cdots+0\cdot\chi_{\Omega_{L_0}}\in H^1(\Omega),\\
v=& v_1\chi_{\Omega_1}+0\cdot\chi_{\Omega_2}+\cdots+0\cdot\chi_{\Omega_{L_0}}\in H^1(\Omega).
\end{split}
\end{equation}
By virtue of \eqref{eq:s3} and \eqref{eq:s4} as well as using the fact that $\Omega_j$ are mutually disjoint, it is directly seen that $w$ and $v$ defined in \eqref{eq:p2} are nontrivial solutions to \eqref{spectral} with $k=k_1$. Hence, $k_1\in \boldsymbol{\sigma} [\Omega;\sigma,\tau]$, which readily proves \eqref{eq:p1}.

We proceed to prove that
\begin{equation}\label{eq:p3}
\boldsymbol{\sigma} [\Omega;\sigma,\tau]\subset \mathop{\cup}_{j=1}^{L_0} \boldsymbol{\sigma} [\Omega_j; \sigma_j,\tau_j].
\end{equation}
Let $k\in \boldsymbol{\sigma} [\Omega;\sigma,\tau]$ and $w, v$ be the associated transmission eigenfunctions. Since $w\in H^1(\Omega)$ and $v\in H^1(\Omega)$ are non-identically zero and $\Omega_j$'s are mutually disjoint, there must exist an $\Omega_{j_0}$, $1\leq j_0\leq L_0$, such that
\begin{equation*}
w_{j_0}=w|_{\Omega_{j_0}}\in H^1(\Omega_{j_0}), \quad v_{j_0}=v|_{\Omega_{j_0}}\in H^1(\Omega_{j_0}),
\end{equation*}
are non-identically zero. Again, by using the facts that $\Omega_j$'s are mutually disjoint and $w, v$ satisfy \eqref{spectral}, it can be seen that
\begin{equation*}
\begin{cases}
\nabla \cdot (\sigma_{j_0}\nabla w_{j_0}) +k^2 \tau_{j_0} w_{j_0} = 0  &\text{in} \quad \Omega_{j_0}, \\
\Delta v_{j_0} +k^2v_{j_0} = 0 &\text{in} \quad \Omega_{j_0}, \\
w_{j_0} =v_{j_0} \coma \sigma_{j_0}\frac{\partial  w_{j_0}}{\partial \nu} = \frac{\partial v_{j_0}}{\partial\nu} &\text{on} \quad\partial \Omega_{j_0},
\end{cases}
\end{equation*}
That is,
\begin{equation*}
k\in \boldsymbol{\sigma}[\Omega_{j_0};\sigma_{j_0},\tau_{j_0}]\subset \mathop{\cup}_{j=1}^{L_0} \boldsymbol{\sigma} [\Omega_j;\sigma_{j},\tau_j],
\end{equation*}
which proves \eqref{eq:p3}.

Finally, combining \eqref{eq:p1} and \eqref{eq:p3} readily yields (\ref{thm}), thus completing the proof.
\end{proof}

\subsection{The geometric patterns of transmission eigenfunctions}
The transmission eigenvalue problem \eqref{spectral} possesses a sequence of transmission eigenvalues  and surface-localized transmission eigenfunctions for the given material parameters $(\sigma,\tau)$ \cite{Deng2021, Jiang2022}. In this subsection, we reexamine the transmission eigenvalue problem for a given wavenumber $k$ and  radial domain $B_{r_0}$. We prove that there exists a sequence of $\{n_m\}_{m \in \bbn}$   such that $k$ is the transmission eigenvalue and the corresponding  transmission eigenfunctions $(v_m,w_m)_{m \in \bbn}$ are surface-localized. This allows us  to construct the field concentration generator with arbitrary size from the standpoint of material parameters.
In fact, these eigenfunctions are  surface transmission resonant modes, accompanying strong surface-localized and surface-oscillating patterns along with the energy blowup.
Since the verification of surface-oscillating patterns involves technical and tedious calculations, we defer the related proof to the section \ref{oscillation}.

To begin with, we denote $J_m(x)$ and $j_m(x)$  respectively denote the $m$-th order Bessel function and $m$-th order spherical Bessel function. Moreover, let $j_{m,s}$ and $j'_{m,s}$ respectively denote the $s$-th positive root of $J_m(x)$ and $J'_m(x)$ (arranged according to magnitude).

\begin{Lemma}\label{31}
Consider the transmission eigenvalue problem \eqref{spectral} and assume $B_{r_0}$ is a radial object with radius $r_0$ in $\bbr^N$ centering the origin. Given any wavenumber $k>0$, there exists a sequence of $\{n_m\}_{m\in \bbn}$ defined in \eqref{nn} such that $k$ is the transmission eigenvalue and the corresponding transmission eigenfunctions $(v_m,w_m)$ are both surface-localized. In specific, Given  $s_0 \in \bbn$ be a fixed positive integer,  the material parameter $n_m$ in $\bbr^2$ satisfies
\begin{equation}\label{eigenvalue2}
n_m \in (\frac{j_{m,s_0}}{kr_0},\frac{j_{m,s_0+1}}{kr_0}) ,
\end{equation}
and the corresponding eigenfunctions are
\begin{equation}\label{eigenfunction2}
 v_m(\bx) = \beta_m J_m(kr)e^{\mi m\theta} \coma w_m(\bx) = \alpha_mJ_m(kn_mr)e^{\mi m\theta} ,
\end{equation}
where  $\bx = (r\cos \theta,r\sin \theta) \in \bbr^2$ denotes the polar coordinate. 
In $\bbr^3$, $n_m$ satisfies
\begin{equation}\label{eigenvalue3}
 n_m \in (\frac{j_{m+\frac{1}{2},s_0}}{kr_0},\frac{j_{m+\frac{1}{2},s_0+1}}{kr_0})
\end{equation}
The corresponding eigenfunctions in $\bbr^3$ are
\begin{equation}\label{eigenfunction3}
 v_m^l(\bx) = \beta_m^l j_m(kr)Y_m^l(\theta,\varphi) \coma w_m^l(\bx) = \alpha_m^l j_m(kn_mr)Y_m^l(\theta,\varphi) \coma  -m \leqslant  l \leqslant m ,
\end{equation}
where $\bx = (r\sin \theta \cos \varphi,r \sin\theta \sin\varphi,r\cos \theta) \in \bbr^3$ denotes the spherical coordinate and $Y_m^l$ are the spherical harmonics.

\end{Lemma}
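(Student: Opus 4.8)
The plan is to exploit the radial symmetry to turn the PDE eigenvalue problem \eqref{spectral} into an explicitly solvable special-function problem, and then to extract both the existence of the eigenvalue and the surface-localization from the classical asymptotics of Bessel functions. First I would take the material parameters $\sigma_m,\tau_m$ to be constants on $B_{r_0}$, so that with $n_m=\sqrt{\tau_m/\sigma_m}$ the first equation of \eqref{spectral} becomes the constant-coefficient Helmholtz equation $\Delta w_m+k^2n_m^2w_m=0$. Separating variables in polar (resp.\ spherical) coordinates and discarding the solution singular at the origin, a single angular mode of order $m$ forces $v_m$ and $w_m$ to be proportional to $J_m(kr)e^{\mi m\theta}$ and $J_m(kn_mr)e^{\mi m\theta}$ in $\bbr^2$ (resp.\ to $j_m(kr)Y_m^l$ and $j_m(kn_mr)Y_m^l$ in $\bbr^3$), which is precisely the ansatz \eqref{eigenfunction2}--\eqref{eigenfunction3} and automatically solves the two Helmholtz equations.

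Next I would impose the two transmission conditions on $\partial B_{r_0}$. These give a $2\times2$ homogeneous linear system for $(\alpha_m,\beta_m)$, and nontriviality is equivalent to the vanishing of its determinant, i.e.\ the secular equation
\begin{equation*}
\sigma_m\,n_m\,J_m(kr_0)\,J_m'(kn_mr_0)=J_m(kn_mr_0)\,J_m'(kr_0)
\end{equation*}
in $\bbr^2$, with the obvious analogue in terms of $J_{m+\frac12}$ (equivalently the spherical Bessel function $j_m$, whose zeros are the $j_{m+\frac12,s}$) in $\bbr^3$. Solving this for $\sigma_m$ expresses it as a quotient of Bessel values, and the task reduces to choosing $n_m$ in the stated interval so that $\sigma_m$ is admissible. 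For large $m$ one has $kr_0<j_{m,1}$, whence $J_m(kr_0)>0$ and $J_m'(kr_0)>0$; as $kn_mr_0$ increases from $j_{m,s_0}$ to the enclosed zero of $J_m'$, the resulting $\sigma_m$ runs continuously from $0$ to $+\infty$. Hence by the intermediate value theorem I can choose $n_m\in(j_{m,s_0}/kr_0,\,j_{m,s_0+1}/kr_0)$ so that $\sigma_m$ is real, positive and $\geqslant\gamma_0$, set $\tau_m=n_m^2\sigma_m$, and thereby make $k$ a genuine transmission eigenvalue with eigenpair \eqref{eigenfunction2}--\eqref{eigenfunction3}.

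It then remains to verify the surface-localization. For $v_m$ this is elementary: since $kr\leqslant kr_0\ll m$, the power series gives $J_m(kr)=\frac{(kr/2)^m}{m!}\bigl(1+O(m^{-1})\bigr)$ uniformly on $[0,r_0]$, so $|v_m(r)|^2\propto r^{2m}(1+o(1))$ and the $L^2$-ratio over the interior region $B_{(1-\xi)r_0}$ is $(1-\xi)^{2m+2}(1+o(1))\to0$. For $w_m$ the boundary argument satisfies $kn_mr_0=m+O(m^{1/3})$, because $j_{m,s_0}=m+O(m^{1/3})$ for fixed $s_0$; consequently for $r\leqslant(1-\xi)r_0$ the argument obeys $kn_mr\leqslant(1-\xi/2)m<m$, placing it in the exponentially small Debye regime $J_m(m\,\mathrm{sech}\,\alpha)\asymp e^{-m(\alpha-\tanh\alpha)}$, whereas near $r=r_0$ the argument lies in the turning-point (Airy) region where $J_m=O(m^{-1/3})$. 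Comparing the exponentially small interior mass with the only algebraically small boundary mass yields the ratio $\to0$, and the same regimes applied to $J_m'$ give the gradient-localization. The $\bbr^3$ case is identical after replacing $J_m$ by $J_{m+\frac12}$ and absorbing the angular factor $Y_m^l$ into an $m$-independent constant.

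The main obstacle I anticipate is the Bessel analysis for $w_m$, whose boundary argument sits exactly at the turning point $x\approx m$. Unlike the elementary power-series estimate available for $v_m$, here one must glue the Debye exponential bounds valid for $x\leqslant(1-\delta)m$ to the uniform Airy bound near $x=m$, and control the derivative $J_m'$ in both regimes, all with estimates that are uniform in $m$; this uniform patching, rather than the separation of variables or the secular equation, is the genuinely delicate point.
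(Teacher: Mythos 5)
Your proposal is correct and follows essentially the same route as the paper: the same separated ansatz and secular equation, a sign-change/interlacing argument on $(j_{m,s_0},j_{m,s_0+1})$ for existence (the paper fixes $\sigma$ and finds a root $n_m$ of $f_m(n)=J_m'(kr_0)J_m(knr_0)-\sigma n J_m(kr_0)J_m'(knr_0)$, whereas you solve for $\sigma_m$ as a function of $n_m$ and apply the intermediate value theorem to it --- two equivalent uses of the same interlacing fact), and the same two-regime Bessel asymptotics for localization (power-series decay in the regime $x\ll m$ for $v_m$; Debye exponential smallness against the Airy turning-point bound at $kn_mr_0\approx m$ for $w_m$, exactly as in the paper's appeal to the uniform asymptotics 9.3.31/9.3.35 of Abramowitz--Stegun). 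The only presentational difference is that the paper lower-bounds the denominator integrals via a monotonicity-and-convexity (tangent-line) estimate for $J_m^2(r)r$ rather than invoking the uniform asymptotics directly, but the substance is identical.
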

\begin{proof}
To ensure that $(v_m,w_m)$ are the transmission eigenfunctions of \eqref{spectral}, the transmission boundary conditions on the last line in \eqref{spectral} yield
\begin{equation*}
\alpha_m J_m(knr_0) = \beta_m J_m(kr_0) \quad \text{and} \quad \sigma n \alpha_m  J'_m(knr_0) = \beta_m J'_m(kr_0).
\end{equation*}
So $n$ must be the root of the following equation:
\begin{equation*}
f_m(n) = J'_{m}(kr_0)J_m(knr_0) -\sigma nJ_m(kr_0)J'_{m}(knr_0).
\end{equation*}
For a given integral $s_0 \in \bbn$, it is obvious that $J_m(knr_0) = 0$ when $n = j_{m,s_0}/(kr_0)$ and $j_{m,s_0+1}/(kr_0)$ respectively.
From the fact that the zeros of $J_m(x)$ and $J'_{m}(x)$ are interlaced, it holds that
\begin{equation*}
f_m(\frac{j_{m,s_0}}{kr_0}) \cdot f_m(\frac{j_{m,s_0+1}}{kr_0}) = \sigma^2 \frac{j_{m,s_0}j_{m,s_0+1}}{k^2r_0^2} J^2_m(kr_0)J'_{m}(j_{m,s_0})J'_{m}(j_{m,s_0+1}) < 0.
\end{equation*}
It readily implies by Rolle's theorem that there exists at least one zero denoted by $n_m$ in $(j_{m,s_0}/(kr_0),\linebreak[2]j_{m,s_0+1}/(kr_0))$ satisfying $f_m(n_m) =0$. We have proven that $n_m$ satisfying \eqref{eigenvalue2} and $(v_m,w_m)$ are the corresponding transmission functions associated with the transmission eigenvalue $k$.

 The surface-localizations of $(v_m,w_m)$ mainly depend on the properties of Bessel functions. From the orthogonality of $\{e^{\mi m\theta}\}_{m \in \bbn}$ in the unit circle, we can obtain
\begin{equation}\label{vm}
\|v_m\|^2_{L^2(B_{r_0})} =\beta_m^2 \int_0^{r_0}  J^2_m(kr)r \dx{r} =  \frac{\beta_m^2}{k^2} \int_0^{kr_0} J^2_m(r)r \dx{r}.
\end{equation}
Without loss of generality, we assume $kr_0$ is always less than $m$ and  this assumption always holds when $m$ is sufficiently large. We define a monotonously increasing and convex function
\begin{equation*}
f(r) = J^2_m(r)r \coma r \in (0,kr_0).
\end{equation*}
In fact,  straight calculations yield
\begin{align*}
f'(r) &= J_m(r)(2J'_m(r)r+J_m(r)) >0 ,\\
f''(r) &= 2{J'^2_m}(r)r + 2{J'_m}(r)J_m(r) + 2r^{-1}(m^2-r^2)J_m^2(r) >0.
\end{align*}
The convex property means that the integral \eqref{vm} is bigger than the area of the triangle under the tangent of $f(kr_0)$ with the $x$-axis, namely,
\begin{equation*}
 \int_0^{kr_0} J^2_m(r)r \dx{r} \geqslant  \frac{1}{2}\frac{J_m^3(kr_0)(kr_0)^2}{J_m(kr_0)+2kr_0J'_m(kr_0)}.
\end{equation*}
Since $f(r)$ is  monotonously increasing, it is obvious that
\begin{equation*}
 \int_0^{k\xi r_0} J^2_m(r)r \dx{r} \leqslant \frac{1}{2} J^2_m(k\xi r_0)(k\xi r_0)^2 \coma \xi \in (0,1) .
\end{equation*}
Combining the above two estimates, we further obtain
\begin{equation}\label{limitsv}
\begin{aligned}
\frac{\|v_m\|_{L^2(B_{\xi})}^2}{\|v_m\|_{L^2(B_{r_0})}^2} &\leqslant \frac{J_m^2(k\xi r_0)(k\xi r_0)^2}{J_m^3(kr_0)(kr_0)^2/(J_m(kr_0)+2kr_0J'_m(kr_0))}  \\
&= \left(\frac{J_m(k\xi r_0)}{J_m(kr_0)}\right)^2 \xi^2\left(1 + 2kr_0\frac{J'_m(kr_0)}{J_m(kr_0)} \right).
\end{aligned}
\end{equation}
When $m$ is sufficiently large, it holds that
\begin{equation}\label{esti1}
\frac{J'_m(kr_0)}{J_m(kr_0)} = \frac{m}{kr_0} - \frac{J_{m+1}(kr_0)}{J_{m}(kr_0)} = \frac{m}{kr_0}\left(1 - \frac{k^2r_0^2}{2m(m+1)} + \mathcal{O}(m^{-3})\right).
\end{equation}
where we have used the asymptotic expansions of $J_m$ for larger order,
\begin{equation}\label{Besselfunction}
J_m(x) = \frac{x^m}{2^m \Gamma(m+1)}\left(1 - \frac{x^2}{4(m+1)} + \mathcal{O}(m^{-2}) \right),
\end{equation}
and $\Gamma(x)$ is the Gamma function. By using the asymptotic expansions \eqref{Besselfunction} again, we can get
\begin{equation}\label{esti2}
\frac{J_m(k\xi r_0)}{J_m(kr_0)} = \frac{(k\xi r_0)^m\left(1 - \frac{(k\xi r_0)^2}{4(m+1)} + \mathcal{O}(m^{-2}) \right)}{(kr_0)^m\left(1 - \frac{(kr_0)^2}{4(m+1)} + \mathcal{O}(m^{-2}) \right)} \sim \xi^m.
\end{equation}
Finally, we insert \eqref{esti1} and \eqref{esti2} in  \eqref{limitsv} and get
\begin{equation*}
\frac{\|v_m\|_{L^2(B_{\xi})}^2}{\|v_m\|_{L^2(B_{r_0})}^2} \leqslant C(k,r_0)\xi^{2m+2}(1+2m) \rightarrow 0  \quad \textrm{as} \quad  m \rightarrow \infty.
\end{equation*}
We have proved the surface-localization of $\{v_m\}_{m\in \bbn}$. Similarly,
the $L^2$-norm of $w_m$ is
\begin{equation*}
\|w_m\|^2_{L^2(B_{r_0})} =\alpha_m^2 \int_0^{r_0}  J^2_m(kn_mr)r \dx{r} =  \frac{\beta_m^2}{k^2n_m^2} \int_0^{kn_mr_0} J^2_m(r)r \dx{r}.
\end{equation*}
Since $k n_m r_0 \in (j_{m,s_0},j_{m,s_0+1})$ and the zeros of Bessel functions have the following asymptotic expansions,
\begin{equation}\label{216}
j_{m,s_0} = m + b_{s_0}m^{\frac{1}{3}} + \mathcal{O}(m^{-\frac{1}{3}}),
\end{equation}
where $b_{s_0}$ is some positive constant only depending on the fixed integer $s_0 \in\bbn$, then it holds that
\begin{equation*}
kn_mr_0 > j_{m,1} >j'_{m,1}>m > kn_m \xi r_0.
\end{equation*}
By following a similar argument as \eqref{limitsv}, we can readily get
\begin{equation}\label{wmestimate}
\begin{aligned}
\frac{\|w_m\|_{L^2(B_{\xi})}^2}{\|w_m\|_{L^2(B_{r_0})}^2} &=  \frac{\int_0^{kn_m\xi r_0} J^2_m(r)r \dx{r}}{\int_0^{kn_mr_0} J^2_m(r)r \dx{r}}\leqslant  \frac{\int_0^{kn_m\xi r_0} J^2_m(r)r \dx{r}}{\int_0^{m} J^2_m(r)r \dx{r}}\\
&\leqslant \frac{\frac{1}{2}J_m^2(kn_m\xi r_0)(kn_m\xi r_0)^2}{\frac{1}{2}J_m^3(m)m^2/(J_m(m)+2mJ'_m(m))} \\
&= \left(\frac{J_m(kn_m\xi r_0)}{J_m(m)}\right)^2 \left(\frac{kn_m\xi r_0}{m}\right)^2\left(1 + 2m\frac{J'_m(m)}{J_m(m)} \right).
\end{aligned}
\end{equation}
On the one hand, $kn_m\xi r_0 <m$ and
\begin{align*}
1 + 2m\frac{J'_m(m)}{J_m(m)} &= 1 +2m \left(1 - \frac{J_{m+1}(m)}{J_m(m)}\right) = 1 + m\left( \frac{m+2}{m+1} - \frac{m}{ (m+1)}\frac{J_{m+2}(m)}{J_m(m)}\right) \\
&\leqslant 1 +  \frac{m(m+2)}{m+1} \leqslant m+2.
\end{align*}
where we have used the recurrence formula $J_{m}(x) +J_{m+2}(x) = 2(m+1)J_{m+1}(x)/x$. On the other hand,  from \cite[(9.3.31),  (9.3.35)]{Abramowitz1988}, one can get
\begin{align*}
\frac{J_m(kn_m\xi r_0)}{J_m(m)} \sim C(k,\xi) Ai(m^{\frac{2}{3}} \zeta_{x}) \sim C(k,\xi)  e^{-\frac{2}{3}\zeta_x^{3/2}m}{m^{-1/6}},
\end{align*}
where
$$\zeta_x=\left(\frac{3}{2}\left(\ln \left(\frac{1+\sqrt{1-x^2}}{x}\right)-\sqrt{1-x^2}\right)\right)^{\frac{2}{3}}\coma x =k n_m\xi r_0/m \sim \xi ,$$
and we have used the  asymptotic formula of Airy function for sufficiently $x$,
\begin{equation*}
Ai(x) = \frac{e^{-\frac{2}{3}x^{3/2}}}{2\pi^{1/2}x^{1/4}}\left(1 - \frac{5}{48}x^{-\frac{3}{2}} +\mathcal{O}(x^{-3}) \right).
\end{equation*}
Finally, we can further obtain
\begin{align}\label{wmfinal}
\frac{\|w_m\|_{L^2(B_{\xi})}^2}{\|w_m\|_{L^2(B_{r_0})}^2} &\leqslant  C(k,\xi) e^{-\frac{2}{3}\zeta_x^{3/2}m}{(m+2)}{m^{-1/6}} \rightarrow 0 \quad \text{as} \ m \rightarrow \infty.
\end{align}
Therefore, we have proved the surface-localization of $\{w_m\}_{m\in \bbn}$.

The proof in $\bbr^3$ is similar with that in $\bbr^2$.
First, the transmission boundary conditions for $(v_m^l,w_m^l)$ defined on \eqref{eigenfunction3} yield
\begin{equation*}
f_m(n) = j'_{m}(kr_0)j_m(knr_0) -\sigma nj_m(kr_0)j'_{m}(knr_0).
\end{equation*}
It is noted that the zeros of $j_m(x)$ are the same as that of $J_{m+\frac{1}{2}}(x)$. Taking the value of $ n=\frac{j_{m+\frac{1}{2},s_0}}{kr_0}$ and $n =\frac{j_{m+\frac{1}{2},s_0+1}}{kr_0}$ respectively, we can get
\begin{equation*}
f_m(\frac{ j_{m+\frac{1}{2},s_0}}{kr_0})f_m(\frac{ j_{m+\frac{1}{2},s_0+1}}{kr_0}) = \sigma^2 \frac{ j_{m+\frac{1}{2},s_0}j_{m+\frac{1}{2},s_0+1}}{k^2r_0^2}j'_{m}(j_{m+\frac{1}{2},s_0})j'_{m}(j_{m+\frac{1}{2},s_0+1}).
\end{equation*}
It holds from $j_m(x)  =\sqrt{\frac{\pi}{2x}}J_{m+\frac{1}{2}}(x)$ that
\begin{align*}
\left. j'_{m}(x)\right|_{x = j_{m+\frac{1}{2},s_0}} = \left.\left( \sqrt{\frac{\pi }{2x}} J_{m+\frac{1}{2}}(x)\right)'\right|_{x = j_{m+\frac{1}{2},s_0}} = \sqrt{\frac{\pi}{2j_{m+\frac{1}{2},s_0}}}J'_{m+\frac{1}{2}}(j_{m+\frac{1}{2},s_0}).
\end{align*}
Since  the zeros between $J_{m+\frac{1}{2}}(x)$ and $J'_{m+\frac{1}{2}}(x)$ are interlaced, it is clear  that
\begin{equation*}
f_m(\frac{ j_{m+\frac{1}{2},s_0}}{kr_0})f_m(\frac{ j_{m+\frac{1}{2},s_0+1}}{kr_0})< 0.
\end{equation*}
By the  Rolle's theorem again, there exists at least one zero also denoted by $n_m$ satisfying \eqref{eigenvalue3} and $(v_m^l,w_m^l)$ are the corresponding transmission eigenfunctions.

The surface-localization of $(v_m,w_m)$ is obvious from the results in $\bbr^2$. Indeed, the $L^2$-norm of $(v_m,w_m)$ are given by
\begin{align*}
\|v^l_m\|^2_{L^2(B_{r_0})} &=(\alpha_m^l)^2 \int_0^{r_0}  j^2_m(kr)r^2 \dx{r} =  \frac{\pi(\alpha_m^l)^2}{2k^3} \int_0^{kr_0} J^2_{m+\frac{1}{2}}(r)r \dx{r}, \\
\|w^l_m\|^2_{L^2(B_{r_0})} &=(\beta_m^l)^2 \int_0^{r_0}  j^2_m(kn_mr)r^2 \dx{r} =  \frac{\pi(\beta_m^l)^2}{2k^2n_m^2} \int_0^{kn_mr_0} J^2_{m+\frac{1}{2}}(r)r \dx{r}.
\end{align*}
By substituting the order of Bessel function from $m$ to $m+1/2$ and following the same argument as \eqref{limitsv} and \eqref{wmestimate}, we can readily obtain the surface-localized results for  $(v_m^l,w_m^l)$.

\end{proof}

\section{Artificial generation of customized field concentrations}

In this section, we shall show how to artificially generate customized field concentration around the inclusion via surface transmission resonance.
To that end, we first introduce the Herglotz  approximation and prove the nearly vanishing property of scattered field in the presence of transmission eigenfunctions.



\subsection{Herglotz approximation}
A Herglotz function is a function of the form
\begin{equation}\label{def:her}
v_{g,k} = H_k{g} (\bx) := \int_{\mathbb{S}^{N-1}}e^{\mathrm{i}k\bx\cdot \theta} g(\theta) \dx{\theta} \coma \bx \in \bbr^N,
\end{equation}
where $g(\theta) \in L^2(\mathbb{S}^{N-1})$ is called the Herglotz kernel.
It is clear that the Herglotz function is an entire function  to the Helmholtz equation \eqref{incident} and can be designated as the incident field.
For the purpose of generating field concentration, we shall use a customized Herglotz function to activate the field concentration generator around the inhomogeneous medium inclusion.
Here we introduce the following denseness result of the Herglotz function.
\begin{Lemma} \cite{Weck2004}  \label{appr}
Suppose that $\Omega$ is of class $C^{\alpha,1} , \alpha \in \bbn \cup \{0\}$ with a connected complement in $\bbr^{N},N=2,3$. Denote by $\mathcal{H}$ be the space of all Herglotz functions and
\begin{equation*}
\mathfrak{H}(\Omega) := \{u \in C^\infty (\Omega) \coma \Delta u +k^2 u =0 \; in \; \Omega\}
\end{equation*}
Then $\mathcal{H}(\Omega)$ is dense in $\mathfrak{H}(\Omega)  \cap H^{\alpha +1}(\Omega)$ with respect to the $H^{\alpha+1}$ norm.
\end{Lemma}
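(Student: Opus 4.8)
The statement is a completeness (denseness) result, and I would prove it by the classical duality-plus-Rellich argument rather than by any explicit construction. The plan is to reduce density in the Hilbert space $\mathfrak{H}(\Omega) \cap H^{\alpha+1}(\Omega)$ to the triviality of annihilating functionals: by the Hahn--Banach theorem it suffices to show that every bounded linear functional $F$ on $H^{\alpha+1}(\Omega)$ which vanishes on all Herglotz functions $v_{g,k}$ must also vanish on every $H^{\alpha+1}$-solution of the Helmholtz equation in $\Omega$. Since $v_{g,k}=H_k g$ depends linearly and continuously on the kernel $g \in L^2(\mathbb{S}^{N-1})$ and $v_{g,k}(\bx)=\int_{\mathbb{S}^{N-1}} e^{\mi k \bx\cdot\theta} g(\theta)\dx{\theta}$, the hypothesis $F(v_{g,k})=0$ for all $g$ is equivalent, after exchanging $F$ with the $\theta$-integral, to $F_{\bx}\!\left(e^{\mi k \bx\cdot\theta}\right)=0$ for a.e. $\theta\in\mathbb{S}^{N-1}$.

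Next I would give $F$ a concrete representation. Because $\Omega$ is of class $C^{\alpha,1}$ it admits a bounded Sobolev extension operator, so $F$ extends to a bounded functional on $H^{\alpha+1}(\bbr^N)$ and is therefore represented by a compactly supported distribution $\psi\in H^{-(\alpha+1)}(\bbr^N)$ with $\mathrm{supp}\,\psi\subseteq\overline{\Omega}$, acting by $F(u)=\langle\psi,u\rangle$. With this identification the condition $F_{\bx}\!\left(e^{\mi k\bx\cdot\theta}\right)=0$ reads $\widehat{\psi}(k\theta)=0$ for all $\theta\in\mathbb{S}^{N-1}$; by Paley--Wiener $\widehat{\psi}$ is entire, so this asserts that the frequency transform of $\psi$ vanishes on the whole sphere $|\xi|=k$.

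The geometric heart of the argument is then to turn this frequency condition into a vanishing statement via radiation. I would form the Helmholtz potential $w(\bx):=\langle\psi_{\mathbf{y}},\Phi_k(\bx-\mathbf{y})\rangle$, where $\Phi_k$ is the outgoing fundamental solution of $\Delta+k^2$; then $w$ is a radiating solution of $(\Delta+k^2)w=-\psi$ in $\bbr^N$, smooth in $\bbr^N\setminus\overline{\Omega}$, and its far-field pattern is a constant multiple of $\widehat{\psi}(k\,\hat{\bx})$, hence identically zero. Rellich's lemma together with the hypothesis that $\bbr^N\setminus\overline{\Omega}$ is connected forces $w\equiv 0$ in the exterior, so $w$ has vanishing Cauchy data on $\partial\Omega$. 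Finally, for any $u\in\mathfrak{H}(\Omega)\cap H^{\alpha+1}(\Omega)$ I would apply Green's second identity on $\Omega$ to $w$ and $u$: using $(\Delta+k^2)u=0$ in $\Omega$ and $(\Delta+k^2)w=-\psi$, the interior integral collapses to the boundary term $\int_{\partial\Omega}\left(u\,\partial_\nu w-w\,\partial_\nu u\right)\dx{s}$, which vanishes because the exterior Cauchy data of $w$ are zero. This yields $F(u)=\langle\psi,u\rangle=0$, completing the Hahn--Banach step and hence the density.

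I expect the main obstacle to be the functional-analytic bookkeeping rather than the scattering theory: representing $F$ as a distribution of the correct order supported on $\overline{\Omega}$, and making the Green's identity rigorous at the matched Sobolev regularity ($u\in H^{\alpha+1}$, $\psi\in H^{-(\alpha+1)}$), so that the Cauchy data and the boundary pairing are well defined. The $C^{\alpha,1}$ hypothesis is exactly what is needed here, supplying the extension operator and the trace theorems that render the boundary terms meaningful, while the connectedness of the complement is used only once, to invoke Rellich's lemma. Since the result is quoted from \cite{Weck2004}, I would present this as the structure of the argument and defer the detailed estimates to that reference.
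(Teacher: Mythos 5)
The paper does not prove this lemma at all: it is quoted directly from \cite{Weck2004}, so there is no in-paper argument to compare yours against. Your sketch is the standard duality proof of such denseness results --- reduce to annihilating functionals via Hahn--Banach, represent the functional as a compactly supported distribution $\psi\in H^{-(\alpha+1)}$ with $\mathrm{supp}\,\psi\subseteq\overline{\Omega}$, observe that vanishing on all Herglotz functions means $\widehat{\psi}$ vanishes on the sphere $|\xi|=k$, form the outgoing volume potential whose far field is $\widehat{\psi}(k\hat{\bx})$, and invoke Rellich plus the connectedness of $\bbr^N\setminus\overline{\Omega}$ and Green's identity --- and this is essentially the mechanism behind the cited result (and the classical $L^2$ version in Colton--Kress). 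The outline is correct; the one place where real work remains is the one you flag yourself, namely making the Green's identity and the vanishing of the boundary pairing rigorous when $w=\Phi_k*\psi$ only lies in $H^{1-\alpha}_{\mathrm{loc}}$, so that its interior Cauchy data exist only as negative-order traces matched to $u\in H^{\alpha+1}$; deferring that bookkeeping to \cite{Weck2004} is appropriate for a quoted lemma.
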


\subsection{The nearly vanishing property of  scattered field}
We begin to prove the smallness of scattered field in the presence of transmission eigenfunction. This result is in fact a straight consequence of the
stability estimate for the the scattering system \eqref{sys1}.
To that end, we first truncate the scattering system \eqref{sys1} into a bounded ball $D_R$ with a sufficiently large radius $R$.
\begin{Lemma}\label{truncation}
The scattering system \eqref{sys1} is equivalent to the following truncated system:
\begin{equation}\label{truncated}
\begin{cases}
\Delta u^s_1 + k^2 u^s_1 = 0  & \text{in} \; D_R \backslash \overline{\Omega}, \medskip\\
\nabla \cdot(\sigma \nabla u_1) + k^2 \tau u_1 = 0  & \text{in} \;\Omega, \medskip\\
u_1 = u^s_1 + f \coma \sigma\partial_\nu u_1 = \partial_\nu u^s_1 + g &\text{on} \; \partial \Omega, \medskip\\
\frac{\partial u^s_1}{\partial \nu} = \Lambda u^s_1 & \textit{on} \; \partial D_R ,
\end{cases}
\end{equation}
where  $f = u^i|_{\partial \Omega} , g = \partial_\nu u^i|_{\partial \Omega}$ and
$\Lambda: H^{\frac{3}{2}}(\partial \Omega) \rightarrow H^{\frac{1}{2}}(\partial \Omega)$ is the Dirichlet-to-Neumann map defined by $\Lambda W := \frac{\partial V}{\partial \nu}$ with $V \in H_{loc}^2(\bbr^N \backslash \overline{D_R})$ is the unique solution of
\begin{equation}\label{radiating}
\begin{cases}
\Delta V + k^2 V = 0  & \text{in} \; \bbr^N \backslash \overline{D_R}, \\
V = W & \text{on} \;\partial D_R , \\
\displaystyle \lim\limits_{r \rightarrow \infty} r^\frac{N-1}{2}\left(\partial_r V-ik V\right) =  0 \coma &r= |\bx|.
\end{cases}
\end{equation}
\end{Lemma}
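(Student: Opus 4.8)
The plan is to establish the equivalence in both directions, with the entire argument resting on the unique solvability of the exterior radiating Dirichlet problem \eqref{radiating}, which is precisely what makes the Dirichlet-to-Neumann map $\Lambda$ well defined.

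First, for the implication from \eqref{sys1} to \eqref{truncated}, I would take the unique solution $u = u^i + u^s \in H^2_{loc}(\bbr^N)$ of the full system and set $u_1 := u|_\Omega$ and $u^s_1 := u^s|_{D_R\setminus\overline\Omega}$. Inside $\Omega$ the equation $\nabla\cdot(\sigma\nabla u_1)+k^2\tau u_1=0$ holds by the definition of $\tilde\sigma,\tilde\tau$, while in $D_R\setminus\overline\Omega$ one has $\tilde\sigma=\tilde\tau=1$, so $\Delta u^s + k^2 u^s = 0$ there. The interface conditions on $\partial\Omega$ are exactly the natural (weak) transmission conditions attached to the divergence-form operator: continuity of $u$ yields $u_1 = u^s_1 + f$ with $f = u^i|_{\partial\Omega}$, and continuity of the conormal flux $\tilde\sigma\partial_\nu u$ yields $\sigma\partial_\nu u_1 = \partial_\nu u^s_1 + g$ with $g = \partial_\nu u^i|_{\partial\Omega}$. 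For the artificial boundary condition, I would observe that $u^s$ solves the Helmholtz equation in $\bbr^N\setminus\overline{D_R}$ and obeys the Sommerfeld radiation condition; hence $u^s$ restricted to $\bbr^N\setminus\overline{D_R}$ is itself a radiating solution with Dirichlet data $W := u^s|_{\partial D_R}$. By the uniqueness of \eqref{radiating}, this restriction coincides with $V$, whence $\partial_\nu u^s_1 = \partial_\nu V = \Lambda u^s_1$ on $\partial D_R$.

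Conversely, given a solution $(u_1,u^s_1)$ of \eqref{truncated}, I would let $V$ be the radiating solution of \eqref{radiating} with $W = u^s_1|_{\partial D_R}$ and glue it to $u^s_1$, defining $u^s := u^s_1$ in $D_R\setminus\overline\Omega$ and $u^s := V$ in $\bbr^N\setminus\overline{D_R}$. By construction $u^s_1$ and $V$ carry the same Dirichlet data on $\partial D_R$, while the relation $\partial_\nu u^s_1 = \Lambda u^s_1 = \partial_\nu V$ guarantees that their Neumann data agree as well. The matching of both Cauchy data across the smooth interface $\partial D_R$ is exactly the condition ensuring that the glued field produces no spurious single- or double-layer contribution, so $u^s \in H^2_{loc}(\bbr^N\setminus\overline\Omega)$ and solves $\Delta u^s + k^2 u^s = 0$ throughout the exterior region, inheriting the radiation condition from $V$. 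Setting $u := u^i + u^s$ outside $\Omega$ and $u := u_1$ inside, the conditions on $\partial\Omega$ reassemble the full system \eqref{sys1}.

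The main obstacle is the gluing step across $\partial D_R$: one must verify rigorously that coincidence of both traces, namely $u^s_1 = V$ and $\partial_\nu u^s_1 = \partial_\nu V$ on $\partial D_R$, upgrades the piecewise-defined field to a genuine $H^2$ solution of the Helmholtz equation across the interface. I would discharge this through a Green's-identity (weak-formulation) computation showing that, for every test function, the boundary terms on $\partial D_R$ cancel so that no surface source is generated; elliptic regularity then delivers the required $H^2_{loc}$ smoothness. Everything hinges on the well-posedness and uniqueness of \eqref{radiating}, which simultaneously renders $\Lambda$ well defined and supplies the uniqueness invoked in the forward direction.
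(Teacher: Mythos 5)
Your proof is correct, and the forward direction is exactly the paper's (restrict $u^s$, then invoke uniqueness of the exterior radiating problem \eqref{radiating} to see that $u^s$ outside $D_R$ coincides with $V$, whence the DtN condition holds on $\partial D_R$). In the converse direction your route differs in mechanics from the paper's. The paper writes $u^s_1$ in $D_R\setminus\overline\Omega$ via Green's representation formula with boundary integrals over both $\partial\Omega$ and $\partial D_R$, and then kills the $\partial D_R$ contribution by pushing it to a sphere at infinity and pairing the Sommerfeld radiation conditions of $V$ and of the fundamental solution $\Phi$; what remains is a layer-potential representation over $\partial\Omega$ alone, which automatically furnishes the radiating extension of $u^s_1$ to all of $\bbr^N\setminus\overline\Omega$. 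You instead glue $u^s_1$ with the exterior solution $V$ across $\partial D_R$, observing that the truncation condition $\partial_\nu u^s_1=\Lambda u^s_1=\partial_\nu V$ together with the trivially matching Dirichlet data makes both Cauchy data continuous, so the piecewise field is a distributional Helmholtz solution with no induced single- or double-layer source, upgraded to $H^2_{loc}$ by interior elliptic regularity. Both arguments hinge on the same well-posedness of \eqref{radiating}; your gluing is somewhat more elementary and avoids the limit computation at infinity, while the paper's representation-formula argument hands you the exterior extension explicitly as potentials supported on $\partial\Omega$. The only detail worth making explicit in your write-up is the orientation convention for $\nu$ on $\partial D_R$ (outward from $D_R$, hence inward for the exterior domain), so that the Neumann traces being equated are taken with consistent signs.
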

\begin{proof}
Let $(u,u^s) \in H^2(\Omega) \times H_{loc}^2(\bbr^N \backslash \overline{\Omega})$ be a solution of \eqref{sys1}. From the definition of $\Lambda$, the restriction of $u^s$ in $D_R \backslash \overline{\Omega}$ is clearly a solution of \eqref{truncated}. Conversely,  let  $(u_1,u^s_1) \in H^2(\Omega) \times H_{loc}^2(\bbr^N \backslash \overline{\Omega})$ be a solution of \eqref{truncated}. We shall prove that the extension of $u^s_1$ from $D_R \backslash \overline{\Omega}$ to outside of $D_R$ satisfies the Sommerfeld radiating condition. From the Green's formula for Helmholtz equation, the representation of $u^s_1$ is
\begin{align*}
u^s_1 =& \int_{\partial \Omega} u^s_1(y) \frac{\partial \Phi(x,y)}{\partial \nu(y)} - \frac{\partial u^s_1}{\partial \nu}(y) \Phi(x,y)\dx{s(y)} \\
&- \int_{\partial D_R} u^s_1(y) \frac{\partial \Phi(x,y)}{\partial \nu(y)} - \Lambda u^s_1(y) \: \Phi(x,y)\dx{s(y)}, \  x \in D_R \backslash  \overline{\Omega}, 
\end{align*}
where $\Phi(x,y)$ is the fundamental solution to Helmholtz equation in $\bbr^N$. From the definition of $\Lambda$, $u^s_1$ is in fact the boundary data of the radiating solution $V$ to \eqref{radiating}. Together with the fact that $\Phi$ is also a radiating solution to Helmholtz equation,  for $x \in D_R \backslash  \overline{\Omega}$, we can deduce that
\begin{align*}
&\int_{\partial D_R} u^s_1(y) \frac{\partial \Phi(x,y)}{\partial \nu(y)} - \Lambda u^s_1(y) \: \Phi(x,y)\dx{s(y)} \\
&= \lim_{r \rightarrow \infty}\int_{\partial B_r} V(y) \left(\frac{\partial \Phi(x,y)}{\partial \nu(y)} -\mi k \Phi(x,y) \right) - \left( \frac{\partial V}{\partial \nu}(y) - \mi k V(y)\right) \Phi(x,y)\dx{s(y)}  \\
& =0, 
\end{align*}
where we have used the boundedness of radiation solution and Sommerfeld radiation condition. Hence, $u_1^s$ coincides with the radiating solution to the \eqref{sys1} in the exterior of $\Omega$. We get $(u_1,u^s_1)$ is also a solution to \eqref{sys1}.
\end{proof}

\begin{Lemma}\label{scattersmall}
Consider the scattering system \eqref{sys1} with a $C^{1,1}$ inclusion $(\Omega;\sigma,\tau)$ in $\bbr^N, N=2,3$ and the wavenumber $k\in \bbr_+$.
We let $(v,w) $ be a pair of transmission eigenfunction in  $(\Omega;\sigma,\tau)$ associated with $k$.
If the incident field $u^i$ impinging on $(\Omega;\sigma,\tau)$ satisfies $\|u^i-v\|_{H^2(\Omega)} \leqslant \varepsilon$,  then there exists a constant $C(k,\sigma,\tau)$ such that the scattered field $u^s$ illuminated by $u^i$ has $\|u^s\|_{H^2_{loc}(\bbr^N \backslash \overline{\Omega})} \leqslant C \varepsilon$.
\end{Lemma}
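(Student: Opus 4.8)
The plan is to exploit the linearity of the forward problem together with the defining property of the transmission eigenfunction, which renders $v$ ``invisible'', and then to invoke the stability of the equivalent truncated system from Lemma~\ref{truncation}. The essential observation is that the scattered field depends only on the Cauchy data $(f,g)=(u^i|_{\partial\Omega},\partial_\nu u^i|_{\partial\Omega})$ of the incident field on $\partial\Omega$, and that this dependence is linear and continuous. Once this is set up, the estimate for $u^s$ reduces to a bound on the Cauchy data of $u^i-v$, which is controlled by $\|u^i-v\|_{H^2(\Omega)}\leqslant\varepsilon$ through the trace theorems.

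First I would verify that the transmission eigenfunction produces no scattered field. Setting $(u_1,u^s_1)=(w,0)$ in the truncated system \eqref{truncated} with data $f=v|_{\partial\Omega}$ and $g=\partial_\nu v|_{\partial\Omega}$, the interior equation holds because $w$ solves $\nabla\cdot(\sigma\nabla w)+k^2\tau w=0$ in $\Omega$; the exterior equation and the relation on $\partial D_R$ hold trivially since $u^s_1\equiv 0$; and the coupling conditions $w=v$, $\sigma\partial_\nu w=\partial_\nu v$ on $\partial\Omega$ are precisely the transmission boundary conditions in \eqref{spectral}. Hence $(w,0)$ solves \eqref{truncated} for this data, and by the uniqueness furnished by the well-posedness of \eqref{sys1} the associated scattered field vanishes identically.

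Next, by the linearity of \eqref{truncated} in the data $(f,g)$, the scattered field $u^s$ generated by $u^i$ coincides with the one generated by the Cauchy data of $u^i-v$. The well-posedness of \eqref{sys1} under the sign conditions \eqref{parameter}, transported to \eqref{truncated} via Lemma~\ref{truncation}, yields the stability estimate
\begin{equation*}
\|u^s\|_{H^2(D_R\setminus\overline{\Omega})}\leqslant C\big(\|(u^i-v)|_{\partial\Omega}\|_{H^{3/2}(\partial\Omega)}+\|\partial_\nu(u^i-v)|_{\partial\Omega}\|_{H^{1/2}(\partial\Omega)}\big),
\end{equation*}
with $C=C(k,\sigma,\tau)$ (for the fixed truncation radius $R$). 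Since $\Omega$ is $C^{1,1}$, the trace and normal-trace theorems bound the right-hand side by $C\|u^i-v\|_{H^2(\Omega)}\leqslant C\varepsilon$. Finally, the Green representation used in Lemma~\ref{truncation} propagates this bound from $D_R\setminus\overline{\Omega}$ to the whole exterior, giving $\|u^s\|_{H^2_{loc}(\bbr^N\setminus\overline{\Omega})}\leqslant C\varepsilon$.

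The main obstacle is the stability estimate itself, namely that the solution operator $(f,g)\mapsto u^s_1$ is bounded from $H^{3/2}(\partial\Omega)\times H^{1/2}(\partial\Omega)$ into $H^2(D_R\setminus\overline{\Omega})$, for this is where the structural assumptions \eqref{parameter} are genuinely used. I would obtain it either by recasting \eqref{truncated} variationally on $D_R$ with the Dirichlet-to-Neumann operator $\Lambda$ and applying the Fredholm alternative, with uniqueness following from the radiation condition together with the imaginary-part constraints $\Im\sigma\leqslant 0$, $\Im\tau\geqslant 0$, or, more directly, by invoking the Hadamard well-posedness of \eqref{sys1} cited from \cite{Colton2019} together with the bounded inverse theorem, which immediately delivers the required continuous dependence on the data.
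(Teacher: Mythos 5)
Your proposal is correct and follows essentially the same route as the paper: reduce to the truncated system of Lemma \ref{truncation}, observe that the Cauchy data of the transmission eigenfunction generate a zero scattered field, use linearity in the boundary data $(f,g)$, and conclude via the stability estimate (established variationally with the Dirichlet-to-Neumann operator, a coercive-plus-compact splitting, Fredholm theory, and uniqueness from the sign conditions and Rellich's lemma) together with the trace theorem. The paper's proof is simply the detailed execution of the variational/Fredholm alternative you sketch as your first option for the stability bound.
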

\begin{proof}
For Lemma \ref{truncation}, we can rewrite \eqref{sys1} in the following form:
\begin{equation}\label{truncate2}
\begin{cases}
\Delta u^s + k^2 u^s = 0  & \text{in} \; D_R \backslash \overline{\Omega}, \medskip\\
\nabla \cdot(\sigma \nabla u) + k^2 \tau u = 0  & \text{in} \;\Omega, \medskip\\
u = u^s + f \coma \sigma\partial_\nu u = \partial_\nu u^s + g &\text{in} \; \partial \Omega, \medskip\\
\frac{\partial u^s}{\partial \nu} = \Lambda u^s & \text{on} \; \partial D_R ,
\end{cases}
\end{equation}
where $f = u^i|_{\partial \Omega}$ and $g = \partial_\nu u^i|_{\partial \Omega}$ and $\Lambda$ is defined in \eqref{radiating}. We first introduce the solution $u_f$ being the unique solution of
\begin{equation*}
\begin{cases}
\Delta u_f + k^2 u_f = 0  & \text{in} \; D_R \backslash \overline{\Omega}, \\
u _f = f  &\text{on} \; \partial \Omega, \\
u _f = 0  &\text{on} \; \partial D_R.
\end{cases}
\end{equation*}
Without loss of generality, we assume that  $k^2$ is not the Dirichlet eigenvalue of $-\Delta$ in $D_R \backslash \overline{\Omega}$ through a proper choice $D_R$.

By introducing the test function $\psi \in H^1(D_R)$, the equivalent variational formulation of \eqref{truncate2} is to find $p \in  H^1(D_R)$ such that
\begin{equation}\label{variration}
\begin{aligned}
&\int _{\Omega} \sigma \nabla p  \nabla \overline{\psi} -k^2 \tau p \overline{\psi} \dx{x}+ \int _{D_R\backslash \overline{\Omega}}  \nabla p \nabla \overline{\psi} -k^2  p \overline{\psi}\dx{x}  - \int_{\partial D_R} \Lambda p \overline{\psi}\dx{s}\\
=& \int _{D_R\backslash \overline{\Omega}}  -\nabla u_f  \nabla \overline{\psi} + k^2  u_f \overline{\psi}\dx{x} + \int_{\partial D_R}\partial_{\nu}u_f  \overline{\psi}\dx{s} + \int_{\partial \Omega}(g - \partial_{\nu}u_f )\overline{\psi}\dx{s} \coma \forall \psi \in H^1(D_R).
\end{aligned}
\end{equation}
By using Green's first theorem, it is easy to check that $u:= p|_{\Omega}$ and $u^s := p|_{ D_R \backslash \overline{\Omega}} -u_f$  satisfies the scattering system \eqref{truncate2}. On the other hand, we can also get the variational formulation \eqref{variration} by multiplying the test function to the first two equations in \eqref{truncate2}.

In order to prove the regularity result of \eqref{truncate2}, we first introduce a bounded operator $\Lambda_0:H^{1/2}(\partial D_R) \rightarrow H^{-1/2}(\partial D_R)$ defined in the Theorem 5.20 \cite{cakoni2005qualitative} and satisfies
\begin{equation*}
- \int_{\partial D_R} \Lambda_0 \psi \overline{\psi} \dx{s} \geqslant C \|\psi\|^2_{H^{\frac{1}{2}}(\partial D_R)} ,
\end{equation*}
for some $C>0$. Further, the operator $\Lambda -\Lambda_0$ is a compact operator from $H^{1/2}(\partial D_R)$ to $H^{-1/2}(\partial D_R)$. Then  the variational formulation \eqref{variration} can be rewritten as
\begin{equation}\label{interal}
a_1(p, \psi) + a_2(p, \psi) + \langle -(\Lambda-\Lambda_0)p,\psi\rangle = \mathcal{F}(\psi) \coma \forall \psi \in H^1(D_R),
\end{equation}
where $\langle \cdot ,\cdot \rangle$ denotes the inner product in $L^2(\partial D_R)$,  and the bilinear forms $a_1$ and $a_2$ together with the linear bounded functional $\mathcal{F}$ are defined by
\begin{align*}
a_1(p, \psi)  &:= \int _{\Omega} \sigma \nabla p  \nabla \overline{\psi} +  k^2 p\overline{\psi} \dx{x} + \int _{D_R\backslash \overline{\Omega}}  \nabla p  \nabla \overline{\psi} + k^2 p\overline{\psi} \dx{x} - \int_{\partial D_R} \Lambda_0 p \overline{\psi} \dx{s}, \\
a_2(p,\psi) &:= -\int _{\Omega} k^2(1+\tau)p \overline{\psi} \dx{x} -\int _{\Omega} 2k^2 p \overline{\psi} \dx{x} ,
\end{align*}
and
\begin{align*}
\mathcal{F}(\psi):=\int _{D_R\backslash \overline{\Omega}}  -\nabla u_f  \nabla \overline{\psi} + k^2  u_f \overline{\psi}\dx{x} + \int_{\partial D_R}\partial_{\nu}u_f  \overline{\psi}\dx{s} + \int_{\partial \Omega}(g - \partial_{\nu}u_f )\overline{\psi}\dx{s}.
\end{align*}
From the assumption of $\sigma$ and boundedness of $\Lambda_0$, it is clear that
\begin{equation*}
a_1(p,\psi) \leqslant C \|p\|_{H^1(D_R)}  \|\psi\|_{H^1(D_R)}  \coma a_1(p,p) \geqslant C \|p\|^2_{H^1(D_R)} ,
\end{equation*}
where $C$ is some positive constant. The Lax-Milgram lemma  indicates there exists a unique linear bounded operator $\mathcal{A}$ with a bounded inverse such that
\begin{equation*}
a_1(p,\psi) = (\mathcal{A}p,\psi) \coma \forall\,  p,\psi \in H^1(D_R).
\end{equation*}
By the Riesz representation theorem, there also exists a bounded operator $B:H^1(D_R) \rightarrow H^1(D_R)$ such that
\begin{equation*}
a_2(p,\psi) = (\mathcal{B}p,\psi) \coma  \forall p,\psi \in H^1(D_R).
\end{equation*}
We claim that $\mathcal{B}$ is compact in $H^1(D_R)$. In fact, Let $\{\phi_j\}$ be a bounded sequence  in $H^1(D_R)$ and the boundedness implies there exists a subsequence denoted also by  $\{\phi_j\}$ satisfying $\phi_j \rightharpoonup \phi$ for some $\phi \in H^1(D_R)$. On the other hand, $\phi_j \rightarrow \phi$ in $L^2(D_R)$ due to the Rellich–Kondrachov theorem. From the definition of $\mathcal{B}$, we know that $\{B\phi_j\}$ is weakly convergent in $H^1(D_R)$ and $(\mathcal{B}(\phi_j -\phi),\psi) =a_2 ((\phi_j -\phi),\psi)$. Let $\psi =\mathcal{B}(\phi_j -\phi),\psi)$, it is clear that
\begin{align*}
\|\mathcal{B}(\phi_j -\phi)\|_{H^1(D_R)} \leqslant  4k^2 \max \{\|1+\tau\|_{L^\infty(\Omega)},2\}\|\phi_j-\phi\|_{L^2(D_R)} \rightarrow 0.
\end{align*}
It means that $\mathcal{B}$ is compact in $H^1(D_R)$. The compactness of $\mathcal{C}$ in  $H^1(D_R)$ in the trace sense  is given in \cite{liu2012singular} where  $\mathcal{C}$ is defined as
\begin{equation*}
\langle -(\Lambda-\Lambda_0)p,\psi\rangle := (\mathcal{C}p,\psi) \coma  \forall p,\psi \in H^1(D_R).
\end{equation*}

We claim that $\mathcal{A} + \mathcal{B}+\mathcal{C}$ is bijective with a bounded inverse in $H^1(D_R)$ if it is injective. In fact, since $\mathcal{A}^{-1}$ is bijective and bounded, then
$$\mathcal{A} + \mathcal{B}+\mathcal{C} = \mathcal{A}(I - (-\mathcal{A}^{-1}( \mathcal{B}+\mathcal{C})),$$
implies the bijective equivalence of $\mathcal{A} + \mathcal{B}+\mathcal{C}$ and $I - (-\mathcal{A}^{-1}( \mathcal{B}+\mathcal{C}))$. From the fact that $\mathcal{A}^{-1}$ is bounded and $ \mathcal{B}+\mathcal{C}$ is compact, we can get that $-\mathcal{A}^{-1}(\mathcal{B}+\mathcal{C})$ is compact. The Fredholm theory gives the bijection of  $\mathcal{A} + \mathcal{B}+\mathcal{C}$ with a bounded inverse if it is injective. Therefore, To show the existence and uniqueness it suffices to show that $\mathcal{A} + \mathcal{B}+\mathcal{C}$  is injective, i.e. the only solution of homogeneous integral equation \eqref{interal} or the equivalent homogeneous system \eqref{truncate2} is identically zero. If this is done, by the Lax-Milgram theory the integral equation \eqref{interal} can be inverted in $H^1(D_R)$ and the inverse operator is bounded. From this, it follows that $(u,u^s)$ depends on the boundary data $(f,g)$.

The uniqueness of \eqref{truncate2} is guaranteed by the Rellich theorem. In specific, when $f=g =0$, we can get
\begin{align*}
\Im \int_{\partial D_R}\frac{\partial u^s }{\partial \nu}\overline{u^s}
&=\Im \left( \int_{D_R \backslash \overline{\Omega}}\Delta u^s \overline{u^s} + |\nabla u^s|^2 + \int_{\partial \Omega} \frac{\partial u^s }{\partial \nu}\overline{u^s} \right)\\
&= \Im \int_{\partial \Omega} \sigma \frac{\partial u}{\partial \nu} \overline{u} = \Im \int_{\partial \Omega}  \sigma|\nabla u|^2 - \Im k^2\int_{\partial \Omega}\tau|u|^2 \leqslant 0.
\end{align*}
Then $u^s  = 0$ in $\bbr^N \backslash \overline{\Omega}$. The homogeneous boundary data now imply that $u = \partial_\nu u = 0$ on $\partial \Omega$. From the unique continuity principle \cite{hormander2007analysis} we get $u = 0$ in $\Omega$. Summarizing all the above analysis, we have the continuous dependence on the boundary data of the incident field to the direct scattering problem in $H^1(D_R)$.  The standard elliptic  regularity estimate can increase the regularity from $H^1(D_R)$ to  $H^2(D_R)$ if the boundary data $(g,f) \in H^{\frac{1}{2}}(\partial \Omega) \times H^{\frac{3}{2}}(\partial \Omega)$.
We observe that the Sobolev norm of $u^i-v$ also gives the boundary information due to trace theorem.  Hence one has that
 \begin{equation*}
\|u^i-v\|_{H^{\frac{3}{2}}(\partial \Omega)} \leqslant \varepsilon \quad  \textit{and} \quad \|\partial_{\nu} u^i- \partial_{\nu} v \|_{H^{\frac{1}{2}}(\partial \Omega)} \leqslant \varepsilon.
\end{equation*}
We let $(w_j, w^s_j)$ be the unique solution to the system \eqref{truncate2} with the boundary conditions   $ f := u^i-v $ and $g := \partial_{\nu} u^i- \partial_{\nu} v $ on $\partial \Omega$ and $(g,f) \in H^{\frac{1}{2}}(\partial \Omega) \times H^{\frac{3}{2}}(\partial \Omega)$.
Then it is clear that
\begin{equation*}
\|w_j^s\|_{H^2_{loc} (\bbr^N \backslash \overline{\Omega})} \leqslant  C(k,\sigma,\tau)\varepsilon.
\end{equation*}
If the transmission eigenfunction $v$ can be exactly  extended to $v^i$ defined in the whole space, Then the scattered field $w^s$ illuminated by $v^i$ is identically zero due to the zero boundary conditions in \eqref{truncate2}. By the linearity of \eqref{sys1}, we can further get  $u^s = w_j^s + w^s$ and
\begin{equation*}
\|u^s\|_{H^2_{loc} (\bbr^N \backslash \overline{\Omega})} \leqslant \|w_j^s\|_{H^2_{loc} (\bbr^N \backslash \overline{\Omega})}  + \|w^s\|_{H^2_{loc} (\bbr^N \backslash \overline{\Omega})}   \leqslant C\varepsilon.
\end{equation*}
We have finished the proof.
\end{proof}

\subsection{Proof of Theorem \ref{11}}

With all the preliminary work above, we are in a position to prove the main results. The main idea of this proof is based on the spectral and geometric patterns of transmission eigenfunctions introduced in section \ref{section2}.
We mainly consider the three-dimensional case, and the two-dimensional case can be proven similarly except for the transmission eigenfunction $v_m^l$ substituted by $v_m$ defined in \eqref{eigenfunction2}.
\begin{proof}
We first define an exterior domain around the  inhomogeneous medium inclusion:
$$\Gamma_e(\Omega,\epsilon) = \{x \in \bbr^3 | \; \mathrm{dist}(x,\Gamma)\leqslant \epsilon, x \notin \overline{\Omega}\},$$
where $\Gamma$ is any subset of $\partial \Omega$ and $\epsilon = o(1)$ is an any given distance parameter. We can artificially generate field concentration phenomenon in $\Gamma_e(\Omega,\epsilon)$ by properly choosing an incident field $u^i$, namely $\|\nabla u\|_{L^\infty(\Gamma_e(\Omega,\epsilon))} >M $ for any given large $M$. The construction of $M$ shall be more definitely specified in what follows. In order to make the proof clearer, we divide it to several steps:

Step 1: We first construct the surface-localized transmission eigenfunctions. Let $B_{r_0} \in \bbr^3$ be a radial ball with radius $r_0>0$ satisfying
$$\epsilon : =\textrm{dist}(B_{r_0},\Gamma) = o(1).$$
By choosing a  proper coordinate system, we assume  $B_{r_0}$ centers at the origin.
We consider the transmission eigenvalue problem \eqref{spectral} in $B_{r_0}$.
From Lemma \ref{31}, there exists a sequence of material parameters $(\sigma_m,\tau_m)_{m \in \bbn}$  such that $k$ is  the transmission eigenvalue.  For example, we can assume
\begin{equation}\label{nm}
\sigma_m(\bx) = 1 \coma \tau_m(\bx) = \sqrt{n_m} \coma \bx \in B_{r_0},
\end{equation}
where $n_m$ is given in \eqref{eigenvalue3}.
The corresponding transmission eigenfunction $v_m^l$ defined in \eqref{eigenfunction3} is  surface-localized and satisfies the homogeneous Helmholtz equation \eqref{incident} in  $B_{r_0}$.

We further consider the transmission eigenvalue problem in $(\Omega; \sigma,\tau) \cup (B_{r_0};\sigma_m,\tau_m)$ where $(\sigma,\tau)$ and $(\sigma_m,\tau_m)$ are defined in \eqref{parameter} and \eqref{nm} respectively. Lemma \ref{transrelation} shows that the transmission eigenvalue $k$ in $B_{r_0}$ is also a transmission eigenvalue in $\Omega \cup B_{r_0}$.  Moreover, the corresponding transmission eigenfunction $\mathbf{v}_m^l$ is given by
\begin{equation}\label{incid}
\mathbf{v}_m^l = v_m^l\cdot \chi_{_{B_{r_0}}} + 0\cdot\chi_{_{\Omega}}.
\end{equation}
It is clear that $\mathbf{v}_m^l$ as the transmission eigenfunction  is the solution of homogeneous Helmholtz equation \eqref{incident} in $\Omega\cup  B_{r_0}$.

Step 2: We next construct the incident field with the Herglotz approximation.
In order to identify the transmission eigenfunction uniquely, we assume the normalized condition $\|\mathbf{v}_m^l\|_{L^2(\Omega \cup B_{r_0})} =1$. Then
\begin{equation*}
\beta_m^l = \frac{1}{\sqrt{\int_0^{r_0} j^2_m(kr)r^2\dx{r}}} = = \sqrt{\frac{2}{\pi}}\frac{k^{\frac{3}{2}}}{\sqrt{\int_0^{kr_0} J^2_{m+\frac{1}{2}}(r)r\dx{r}}}.
\end{equation*}
Since $\Omega$ is a of class $C^{1,1}$ and $\Omega \cup B$ has a connected complement in $\bbr^N$, hence lemma \ref{appr} shows that there exists a Herglotz wave $v_{g,k}$ such that $v_{g,k}$  is a $H^2(\Omega)$ approximation to $\mathbf{v}_m^l$. In other words, given any $\varepsilon >0$, it holds that
\begin{equation}\label{final}
\|v_{g,k} - \mathbf{v}_m^l\|_{H^2(\Omega \cup B_{r_0})} \leqslant \varepsilon.
\end{equation}
The Herglotz function $v_{g,k}$ is in fact the desired incident field since $v_{g,k}$ is the entire solution of Helmholtz equation \eqref{incident} in the whole space. Moreover, the Herglotz function $v_{g,k}$ exhibits strong resonant behaviors in  $B_{r_0}$ as the surface transmission eigenmode and nearly vanishes inside the inclusion $\Omega$.
It is noted that the construction of $v_{g,k}$ is based on the spectral and geometric patterns of transmission resonance. However, the existence of $v_{g,k}$ does not depend on the existence of $(B_{r_0};\sigma_m,\tau_m)$.  In fact, $B_{r_0}$ is an auxiliary radial domain with the same material parameter as the homogeneous background space. We denote the incident field by $u^i := v_{g,k}$.

Step 3: In the following, we analyze the behaviors of the total field $u$  illuminated by $u^i$ in \eqref{sys1}. The inequality \eqref{final} also indicates
\begin{equation}\label{herglotz}
\|u^i - \mathbf{v}_m^l\|_{H^2(\Omega)} \leqslant \varepsilon.
\end{equation}
From lemma \ref{scattersmall}, it is clear that the scattered field $u^{s}$ illuminated by $u^i$ would be sufficiently small outside $\Omega$. With  the Sobolev embedding  $H^2 \hookrightarrow C^0$,   we can further get
\begin{equation}\label{small}
\|u^s\|_{\infty, G\backslash \overline{\Omega}} \leqslant C(k,\sigma,\tau) \varepsilon,
\end{equation}
where $G$ is an any bounded domain containing $\Omega \cup B_{r_0}$. Therefore, the inclusion $\Omega$ is like invisible obstacle such that the behaviors of the incident field is almost undisturbed in the presence of the scatterer $\Omega$.

Since the behaviors of total field  is almost described by the incident field outside the homogeneous inclusion $\Omega$, we can investigate the behavior of incident field $v_{g,k}$. First,
\begin{align*}
v_m^l(\bx) = \beta_m^l j_m(kr) Y_m^l(\theta,\psi) =\frac{k}{\sqrt{\int_0^{kr_0} J^2_{m+\frac{1}{2}}(r)r\dx{r}}}J_{m+\frac{1}{2}}(kr)r^{-\frac{1}{2}}Y_m^l(\theta,\varphi) \coma \bx \in B_{r_0}.
\end{align*}
Without loss of generality, we always assume $m$ is sufficiently large such that $kr_0$ is less than $j'_{m+1/2,1}$. Then $J_{m+1/2}(kr)r^{-1/2}$ is monotonously increasing for $r \in (0,r_0)$ and attains the maximum ar $r = r_0$. By straight calculation, it holds that
$$
\max_{r\in (0,r_0)} \frac{J_{m+\frac{1}{2}}(kr)kr^{-\frac{1}{2}}}{\sqrt{\int_0^{kr_0} J^2_{m+\frac{1}{2}}(r)r\dx{r}}} = \frac{J_{m+\frac{1}{2}}(kr_0)kr_0^{-\frac{1}{2}}}{\sqrt{\int_0^{kr_0} J^2_{m+\frac{1}{2}}(r)r\dx{r}}} =  \frac{\sqrt{2m+3}}{r_0^{\frac{3}{2}}} \left(1 + \mathcal{O}(m^{-2}) \right).
$$
and
\begin{equation*}
\sup_{(\theta ,\varphi)\in T } Y_m^l (\theta,\varphi) = \sqrt{\frac{2m+1}{4\pi}} \sup_{\theta \in [0,\pi]}\sqrt\frac{(m-|l|)!}{(m-|l|)!}P_m^{|l|}(\cos \theta) \geqslant \sqrt{\frac{1.11}{4\pi}\frac{m+1/2}{|l|+1}} \coma  -m \leqslant l \leqslant m,
\end{equation*}
where $T := [0,\pi] \times [\varphi_1,\varphi_2],\varphi_1,\varphi_2 \in [0,2\pi)$  we have used lemma \ref{legendre}. Combining the above estimates, we can finally obtain
\begin{equation}\label{sobe}
\sup_{\bx \in \overline{B}_{r_0}} v_m^l(\bx) = \sup_{\bx \in \partial{B_{r_0}}} v_m^l(\bx)  \geqslant \frac{1}{\sqrt{2\pi}r_0^{3/2}}\frac{m+1/2}{\sqrt{|l|+1}}\left(1 + \mathcal{O}(m^{-2}) \right)\geqslant \frac{1}{3r_0^{3/2}}\frac{m+1/2}{\sqrt{|l|+1}},
\end{equation}
for sufficiently large $m$.

Step 4: We finally can prove the gradient of total field would blow up inside $\Gamma_e(\Omega,\epsilon)$. From the formulas \eqref{incid}, \eqref{herglotz}, \eqref{sobe} and the Sobolev embedding  $H^2 \hookrightarrow C^0$ inside $B_{r_0}\cup \Omega$, it holds that
\begin{equation*}
\sup_{\bx \in \partial{B_{r_0}}} u^i(\bx)  \geqslant \frac{1}{3r_0^{3/2}}\frac{m+1/2}{\sqrt{|l|+1}} - \varepsilon \quad \text{and} \quad \sup_{\bx \in \partial{\Omega}} u^i(\bx) \leqslant \varepsilon.
\end{equation*}
The estimate \eqref{small} implies that the scattered field  would be sufficiently small if we designate the incident field $u^i$ to be the Herglotz approximation $v_{g,k}$ to \eqref{incid}. It means that the total field  outside the scatterer $\Omega$ equals to the incident field plus a sufficiently small term. Let
$$
\bx_0 = \mathop{\arg\sup}_{\bx \in \partial B_{r_0}}v_{g,k}(\bx).
$$
we can assume $\epsilon  :=\textrm{dist}(B_{r_0},\Gamma) = \textrm{dist}(\bx_0,\Gamma)$ by rigid motions if necessary. Then
\begin{equation*}
 u(\bx_0) \geqslant \frac{1}{3r_0^{3/2}}\frac{m+1/2}{\sqrt{|l|+1}} -(1+C(k,\sigma,\tau)) \varepsilon \quad \text{and} \quad \sup_{\bx \in \partial{\Omega}} u(\bx) \leqslant (1+C(k,\sigma,\tau))\varepsilon.
\end{equation*}
Since $(1+C(k,\sigma,\tau))\varepsilon$ is sufficiently small, then we can assume
\begin{equation*}
(1+C(k,\sigma,\tau))\varepsilon \leqslant \frac{1}{12r_0^{3/2}}\frac{m+1/2}{\sqrt{|l|+1}}.
\end{equation*}
Finally by using the mean value theorem, we can obtain the conclusion
\begin{equation}\label{conclusions}
\|\nabla u \|_{L^\infty(\Gamma_e(\Omega,\epsilon))} \geqslant \frac{1}{\epsilon}\frac{1}{6r_0^{3/2}}\frac{m+1/2}{\sqrt{|l|+1}} \geqslant  \frac{1}{6}r_0^{-3/2} (m+\frac{1}{2})^{\frac{1}{2}}\frac{1}{\epsilon}.
\end{equation}
It is obvious from \eqref{conclusions} that $\nabla u$ would blow up when $m$ goes to infinity or $\epsilon$ goes to zero. Therefore,  For any given large number $M$, we can always construct the parameters $\epsilon,m$ and $r_0$ such that
\begin{equation*}
\frac{1}{6}r_0^{-3/2} (m+\frac{1}{2})^{\frac{1}{2}}\frac{1}{\epsilon} \geqslant M.
\end{equation*}
Here, the construction of parameters $\epsilon,m$ and $r_0$ depends on the  incident field $u^i$.
\end{proof}

\begin{Remark}
It is noted that the field concentration can occur at multiple localizations near the inclusion $\Omega$. In specific, we  can add multiple disjoint virtual field concentration generators $B_1 ,\cdots,B_{L_0}$ around the inclusion $\Omega$ for  $L_0 \in \bbn$. The incident field $u^i$ is designated to  Herglotz approximation to $\mathbf{v}_m^l = v_m^l\cdot \chi_{_{B_1}} + v_m^l\cdot \chi_{_{B_2}} + \cdots + v_m^l\cdot \chi_{_{B_{L_0}}} +  0\cdot\chi_{_{\Omega}}$ where $v_m^l$ are surface-localized transmission eigenfunctions up to a translation transformation. Thus,  the field concentration can occur in the intervals between $\Omega$ and $B_1 ,\cdots,B_{L_0}$ by following a similar argument.
\end{Remark}
\begin{Remark}
From the theorem \ref{11}, it is obvious that we can achieve field concentration in two customized ways. One is high-order probing. We can choose a high-order field concentration generator with  more intensive surface-localized property such that $m$ is sufficiently large. The other is in a geometric manner, i.e. the resonator is sufficiently close to the inclusion $(\epsilon \rightarrow 0)$.
\end{Remark}

\section{Numerical experiments}
In this section,  several numerical experiments  are presented to verify the theoretical results.  On the one hand, we shall determine the Herglotz wave, which is an approximation to transmission eigenfunction localizing on the surface of $B$ while almost vanishing within the interior of $\Omega$. On the other hand, we utilize the Herglotz wave as the incident field and obtain the total field by solving the scattering problem of the Helmholtz equation.

Given   a radial area $B$ near the scatterer $\Omega$,   we first obtain kernel function $g(x)$ by  solving the integral equation \eqref{def:her}. Noting that $H_k$ is compact, that is, the inverse of integral equation \eqref{def:her} is generally ill-posed,  then we use the Tikhonov regularization method as introduced in \cite{zhang2013herglotz}. We consider the integral equation
$   H_k g = f \coma   f  \in L^2 (\partial \Omega) \times L^2(\partial B)
$. The Tikhonov regularization is adopted to the following equation:
\begin{equation*}
    \alpha g_\alpha +H^*_gH_g g_\alpha = H^*_g f.
\end{equation*}
Thus, the regularized kernel function $g_\alpha$ is given by $ (\alpha + H^*_gH_g)^{-1}H^*_g f$. Next, we extend the  incident field to the whole domain by using the Herglotz approximation, namely, $u^i=H_kg_\alpha$.
Finally, we designate $u^i$ as the incident field and solve the inhomogeneous scattering system \eqref{sys1}.

In what follows, we present three examples to illustrate the phenomenon of field concentration.
In the first example, we consider an ellipse domain $\Omega$, i.e.,
\begin{equation*}
  \Omega:=\left\{z=(x,y)\; |\; \frac{(x+3.2)^2}{3^2 }+\frac{ (y+3.2)^2}{4^2} < 1\right\}.
\end{equation*}
 Here, we set the wavenumber $k=3$ and the parameter  $\sigma=1$.  By solving the transmission eigenvalue problem \eqref{spectral},  we obtain  $\tau=17.5285$, where the regularization parameter is chosen as $\alpha=10^{-5}$.
 The second example is an rectangle with width $a=4$ and height $b=8$, that is,
\begin{equation*}
 \Omega:=\{z=(x,y)\ |\  -5.1 <  x < 1.1 , -4 < y < 4 \},
\end{equation*}
where the parameters are set to be $k=1$ and $\sigma=1/4$. Furthermore,  we  have $\tau=16.2183$,  where the regularization parameter is chosen as $\alpha=10^{-6}$.
In the final example, we consider a kite domain $\Omega$, i.e.,
\begin{equation*}
  \Omega:=\left\{z=(x,y)\; |\; (x,y)=(\cos t+0.65\cos 2t+1.4, \ 1.5 \sin t), \ t\in[0,\, 2\pi]\right\}.
\end{equation*}
 Here, the parameters are set to be $k=\sqrt{3}$ and $\sigma=1/3$,  thus we get $\tau=19.9121$  and the regularization parameter is chosen as $\alpha=10^{-3}$.

		\begin{figure}[!ht]
		\centering
		\begin{minipage}{1\textwidth}
			\hfill\subfigure[$u$]{\includegraphics[width=0.33\textwidth]
                   {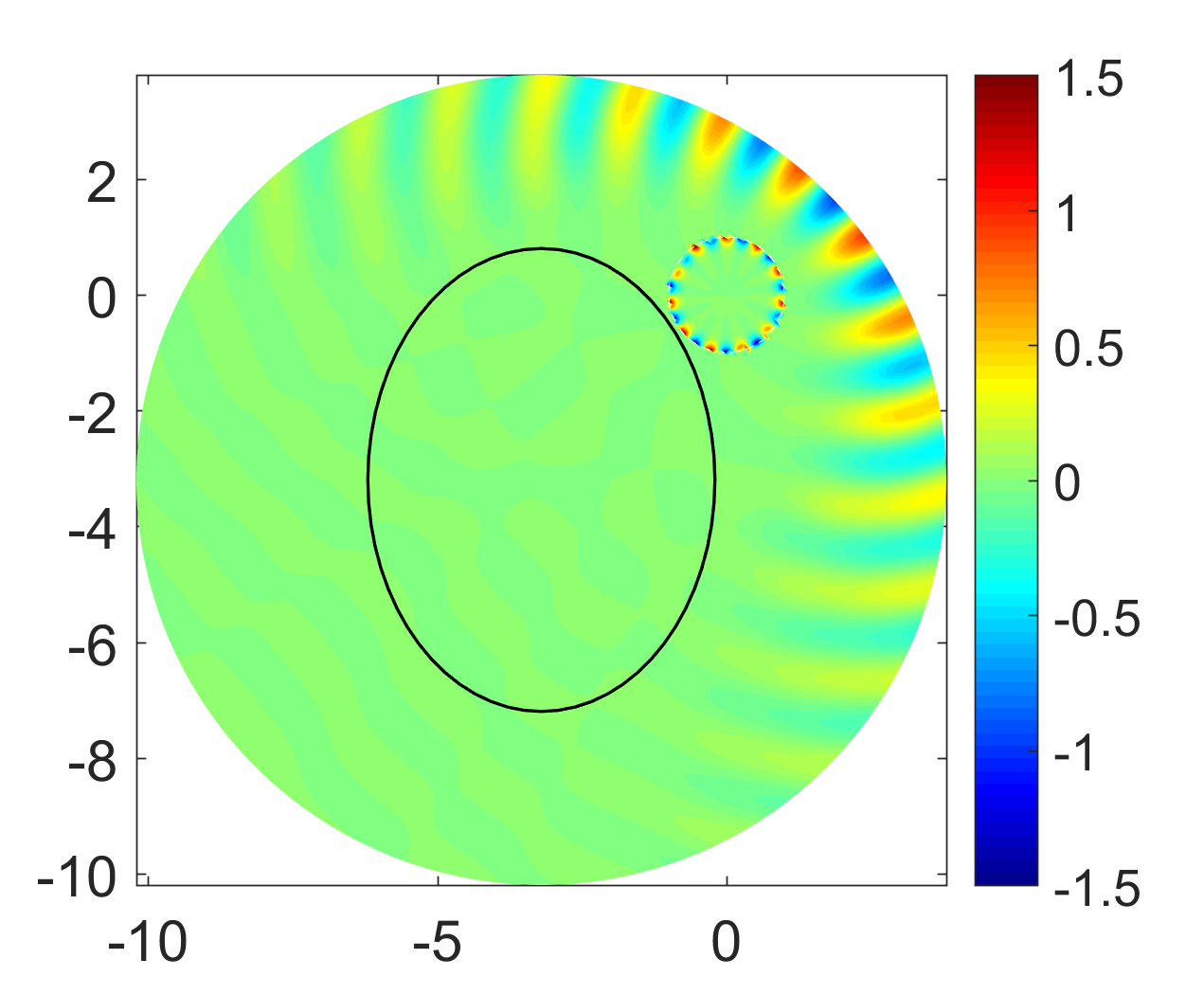}}\hfill
\hfill\subfigure[$|\nabla u|$]{\includegraphics[width=0.33\textwidth]
                   {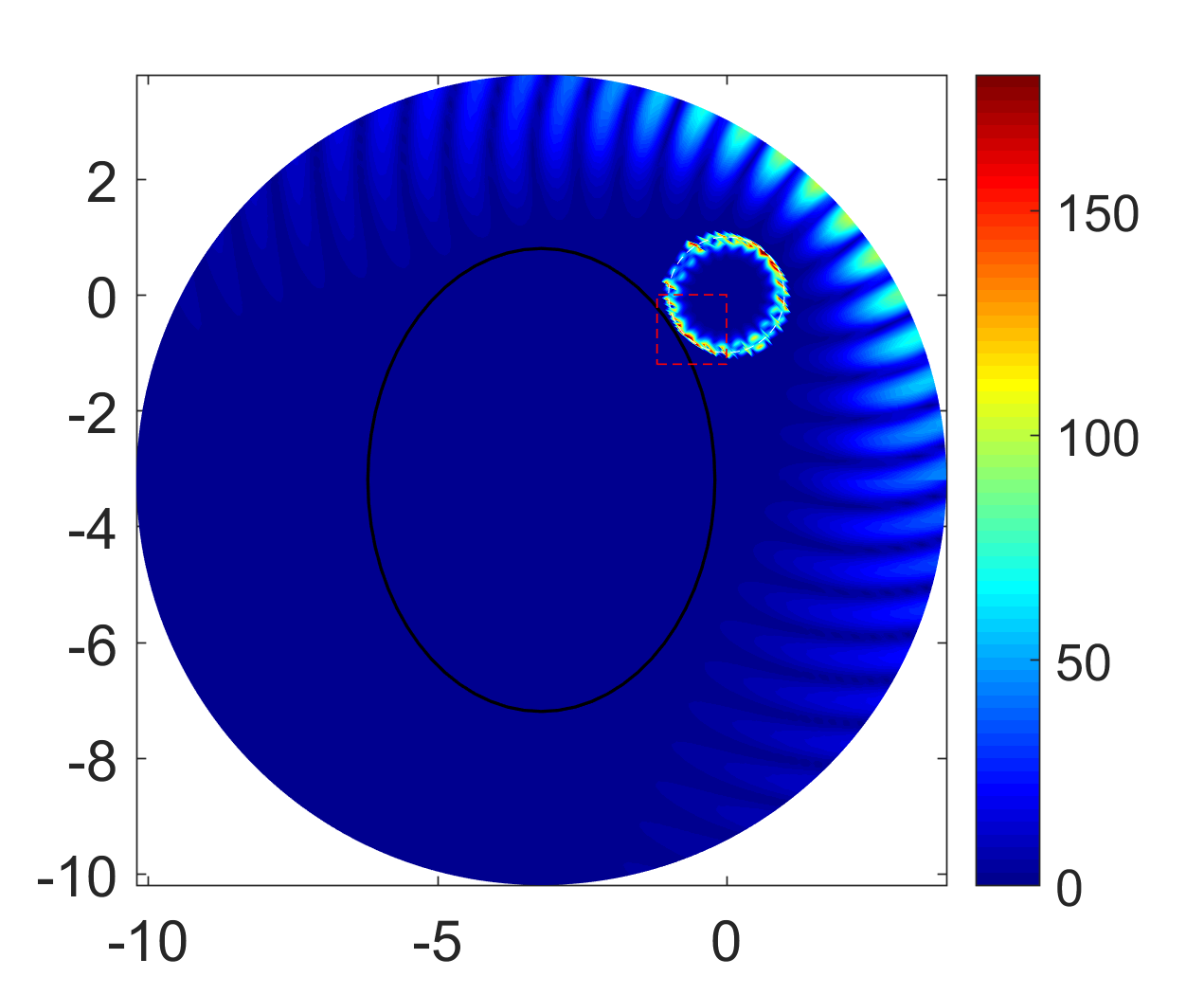}}\hfill
\hfill\subfigure[local $|\nabla u|$]{\includegraphics[width=0.33\textwidth]
                   {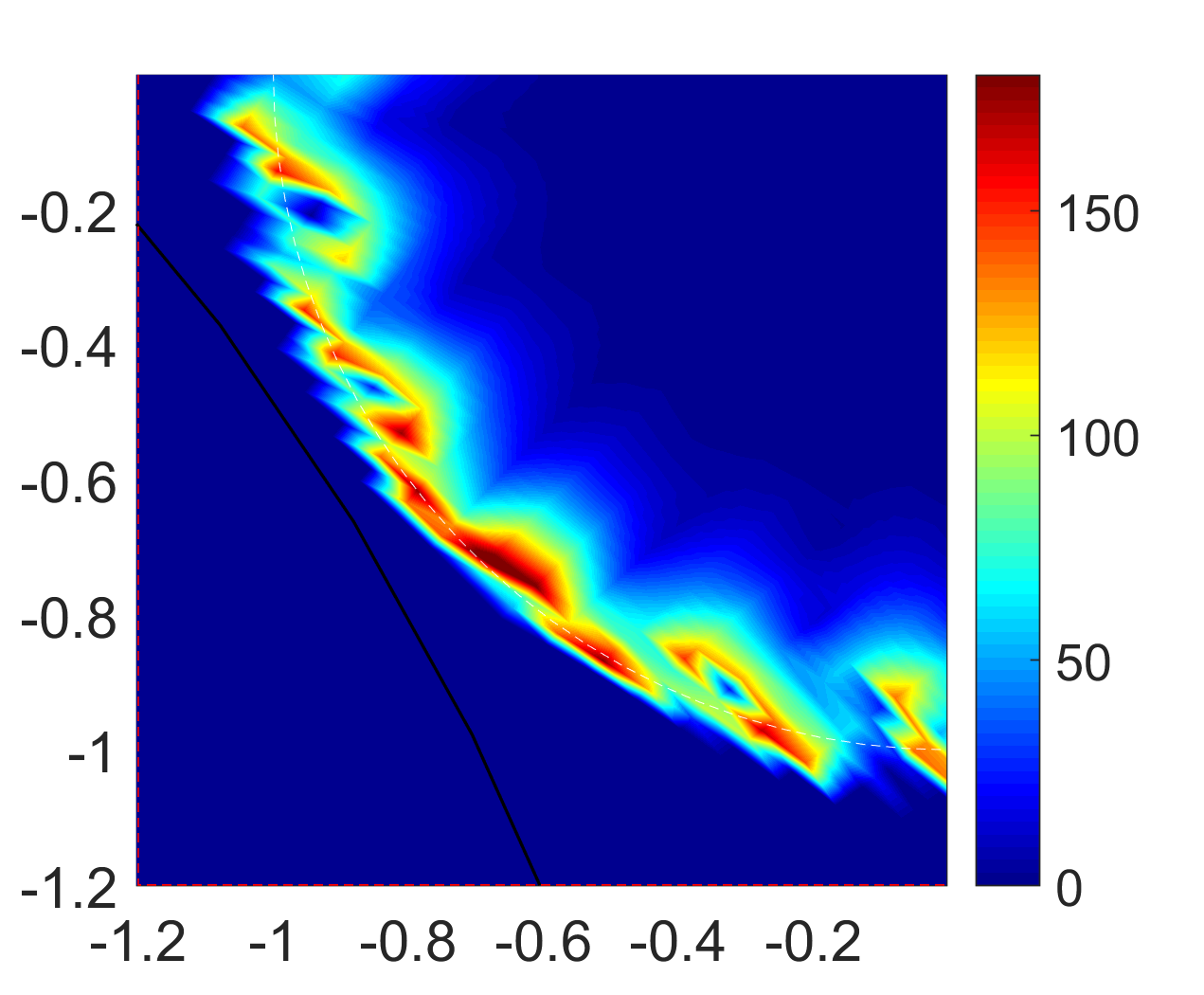}}\hfill
			\caption{The total field $u$ and the magnitude of gradient $\nabla u$ of an ellipse domain. }\label{fig:2}
		\end{minipage}
		\begin {minipage}{1\textwidth}
\hfill\subfigure[$u$]{\includegraphics[width=0.33\textwidth]
                   {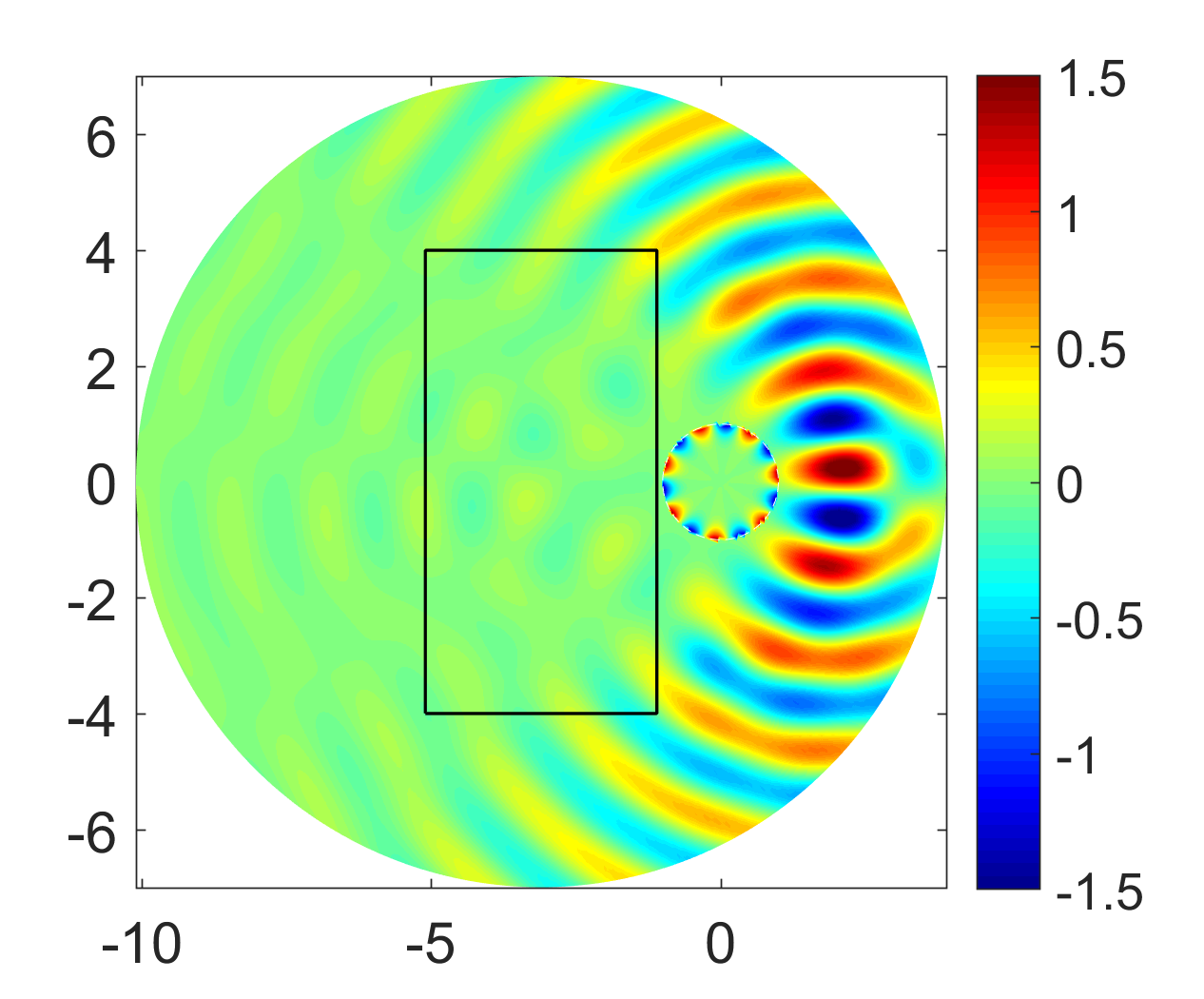}}\hfill
\hfill\subfigure[$|\nabla u|$]{\includegraphics[width=0.33\textwidth]
                   {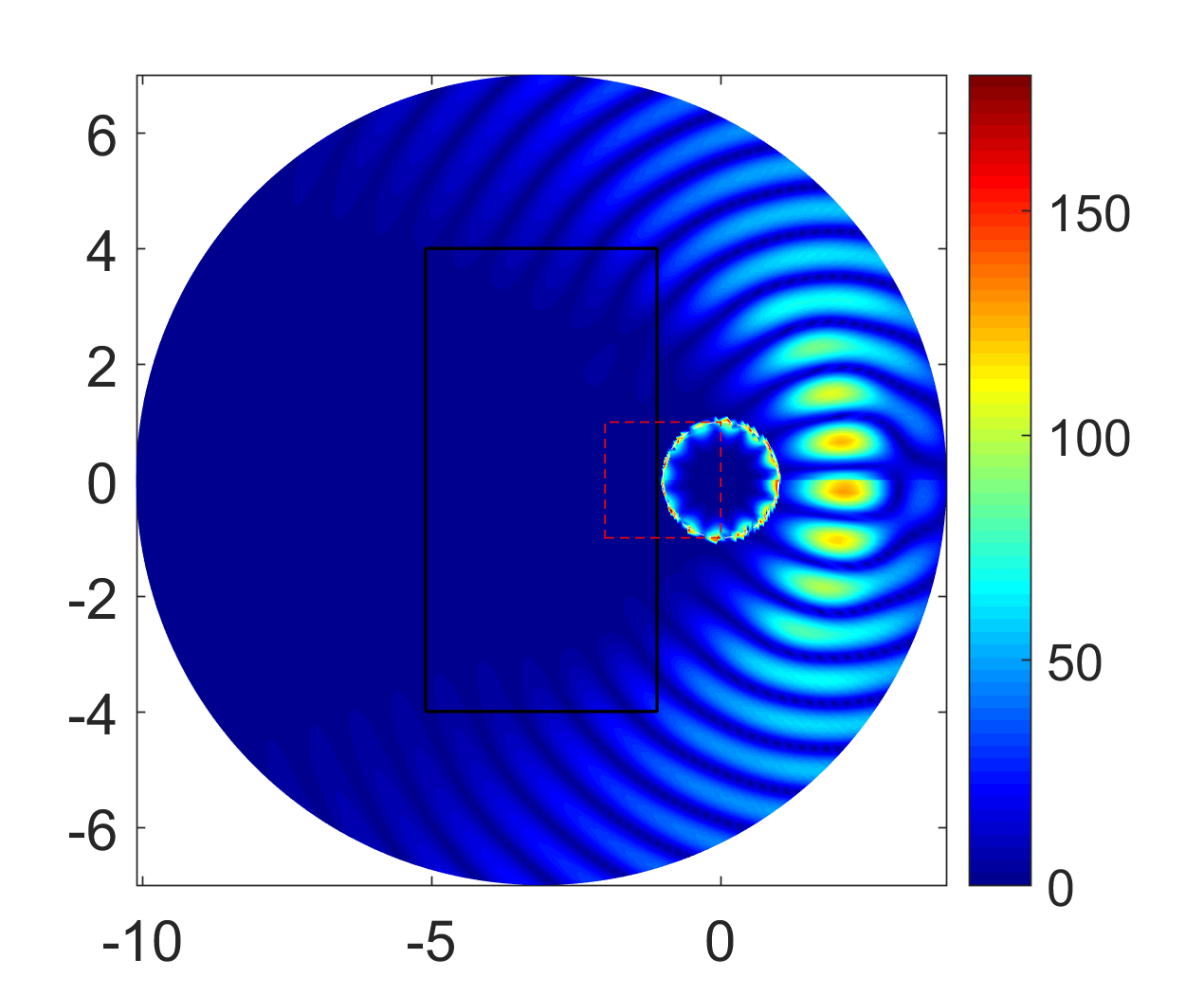}}\hfill
\hfill\subfigure[local $|\nabla u|$]{\includegraphics[width=0.33\textwidth]
                   {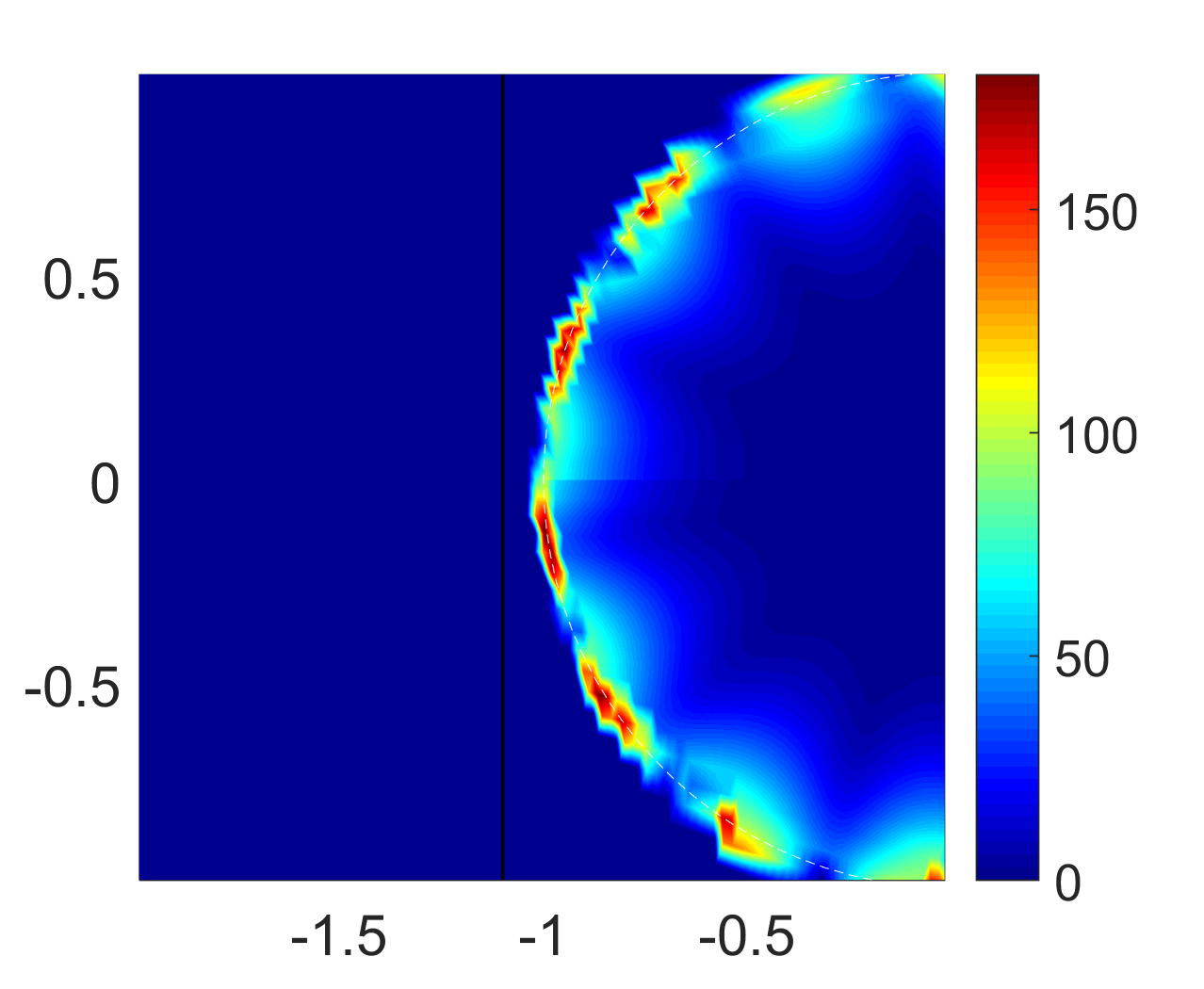}}\hfill
		\caption{The total field $u$ and the magnitude of gradient $\nabla u$ of a square domain.}\label{fig:3}
	\end{minipage}
	\begin {minipage}{1\textwidth}
\hfill\subfigure[$u$]{\includegraphics[width=0.33\textwidth]
                   {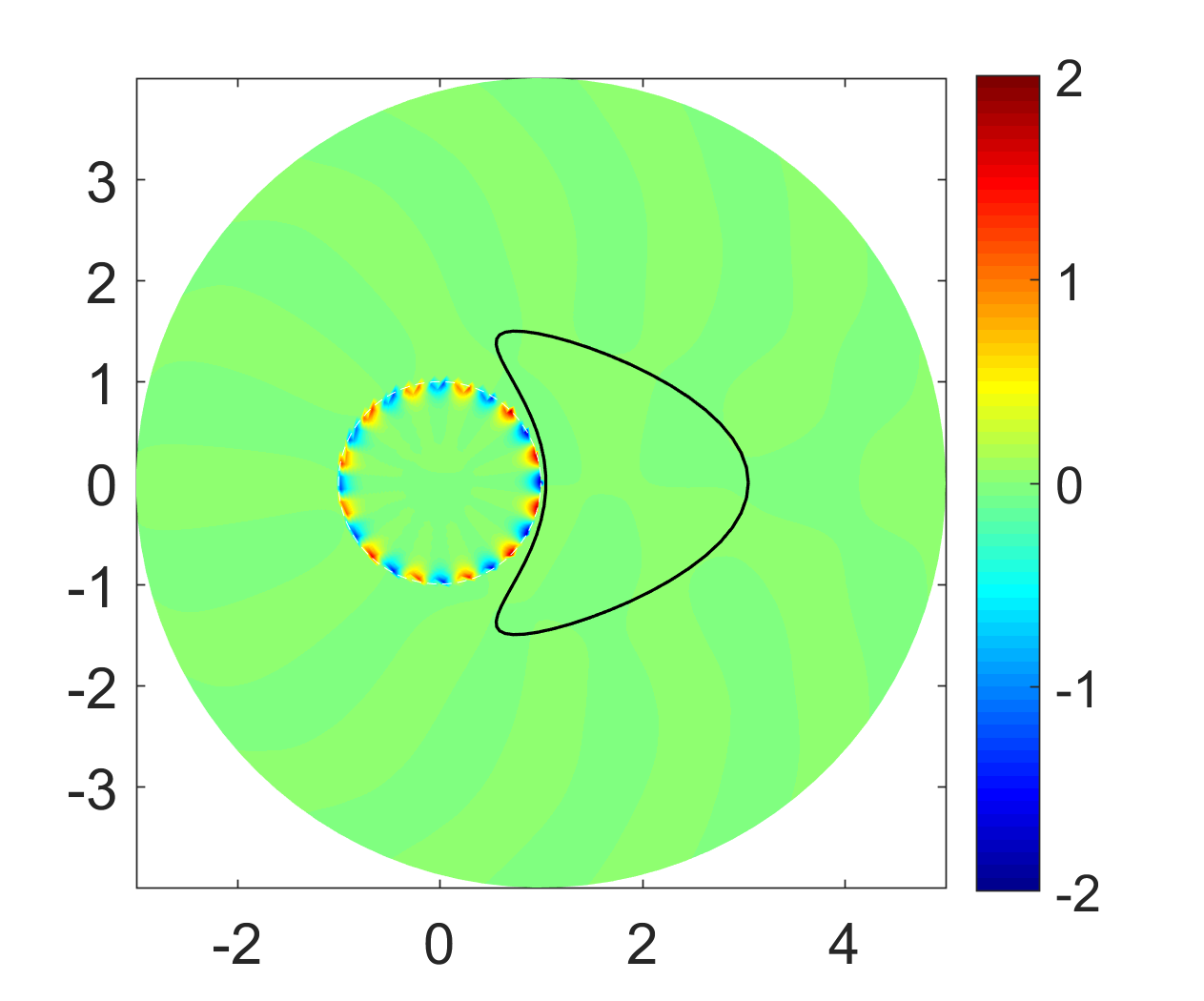}}\hfill
\hfill\subfigure[$|\nabla u|$]{\includegraphics[width=0.33\textwidth]
                   {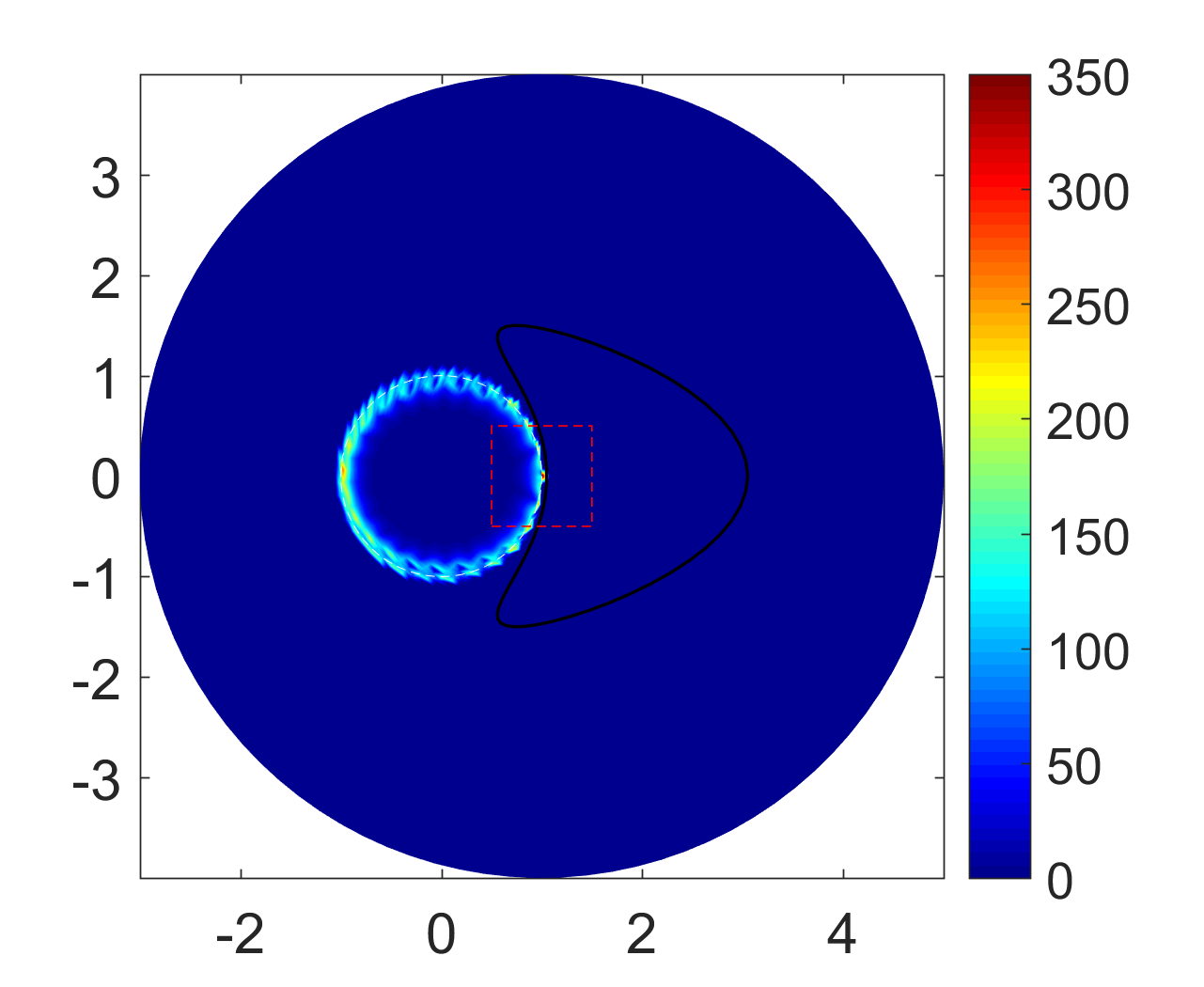}}\hfill
\hfill\subfigure[local $|\nabla u|$]{\includegraphics[width=0.33\textwidth]
                   {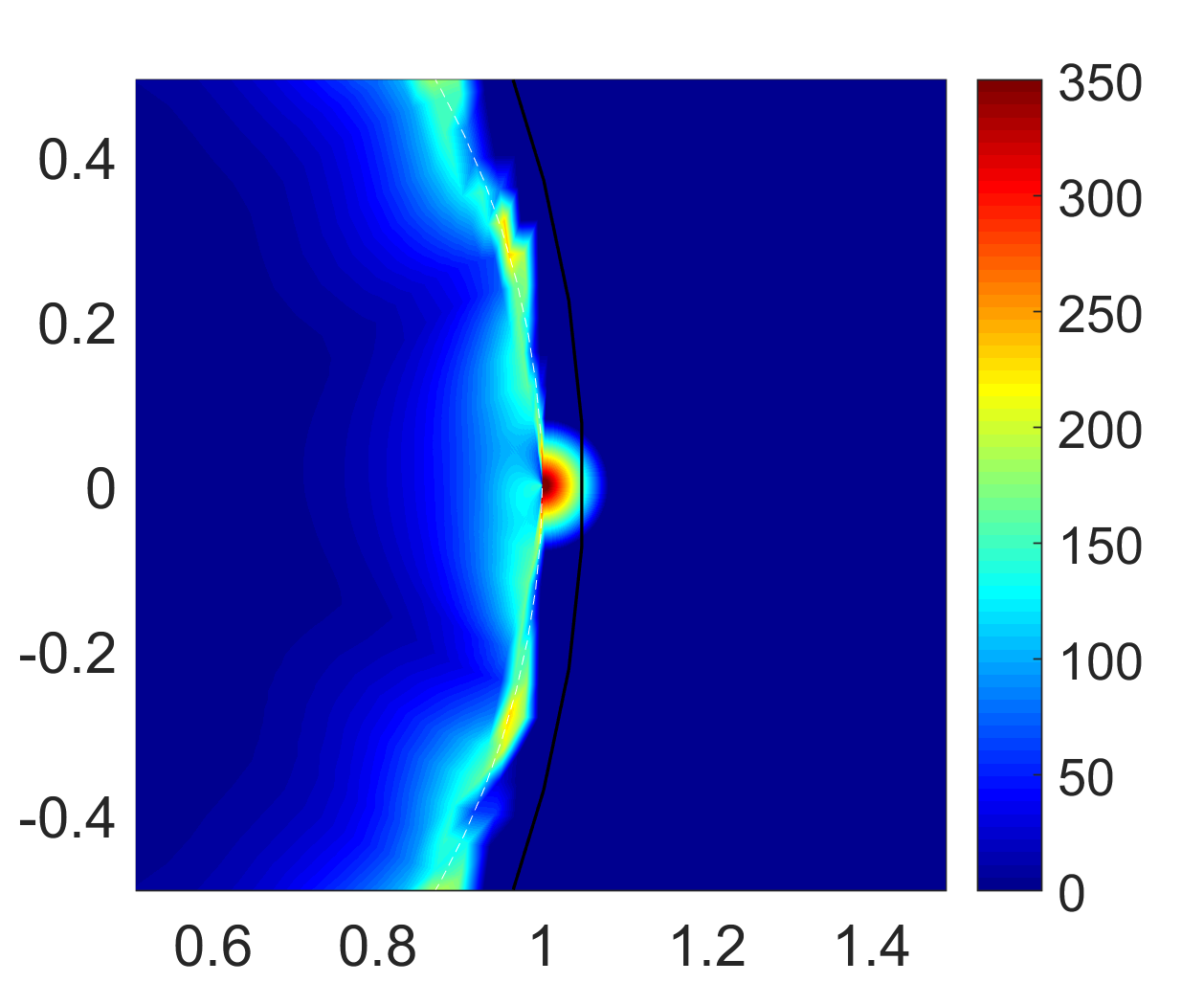}}\hfill
		\caption{The total field $u$ and the magnitude of gradient $\nabla u$ of a kite domain.}\label{fig:4}
	\end{minipage}
	\end{figure}

 From figures  \ref{fig:2}(a), \ref{fig:3}(a) and \ref{fig:4}(a),  one can find that the total field  is bounded and is a sufficiently precise approximation to the surface-localized eigenfunction and trivial eigenfunction in the area $B$ and $\Omega$ respectively. Furthermore,  it can be observed that the gradient field $\nabla u$ blows up near the scatterer and would decay sharply when the total field leaves the area B, as depicted in figures \ref{fig:2}(b), \ref{fig:3}(b) and \ref{fig:4}(b). To exhibit the imaging results clearly, we enlarge a portion of the image (the red dashed square domain in figures \ref{fig:2}(b), \ref{fig:3}(b) and \ref{fig:4}(b)). It is clear to see that the gradient of the total field demonstrates the field concentration phenomenon in figures \ref{fig:2}(c),  \ref{fig:3}(c) and \ref{fig:4}(c). It is worth mentioning that the dashed circles presented in the figures are virtual objects and do not exist in practical applications. Moreover, there are no restrictions on the scale of the circles, and they can be of arbitrary scale. Therefore, our method can demonstrate good performance for multi-scaled scatterers.

\section{surface-oscillation of transmission eigenfunctions}\label{oscillation}
In general,  the magnitude of  a wave field gradient  is proportional to its wavenumber,  as observed in plane waves  such as the plane wave $e^{\mi k \bx \cdot d }$ where $d \in \mathbb{S}^{N-1}$. However, this relationship does not hold for surface-localized transmission resonant eigenfunctions. The oscillating behaviors of these transmission eigenfunctions tends to concentrate in the neighborhood along the boundary and the oscillating frequencies are much higher than the excitation frequency.

\subsection{Auxiliary results}
Before proving Theorem \ref{oscillating}, we first introduce several auxiliary lemmas to facilitate the main proof.
\begin{Lemma}\label{besselintegral}
The following integral formula for any $m>0$ and $y \geqslant 0$ holds:
\begin{equation*}
\int_{0}^y J'^2_m(x)x + \frac{m^2}{x}J_m^2(x) \dx{x} = \int_0^y J^2_{m-1}(x)x\dx{x} - m J_m^2(y).
\end{equation*}
\end{Lemma}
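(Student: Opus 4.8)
The plan is to reduce the entire identity to the single three-term recurrence
\[
J_{m-1}(x) = J_m'(x) + \frac{m}{x}J_m(x),
\]
which is the standard relation for Bessel functions, and then to integrate a perfect derivative. This is preferable to manipulating Bessel's differential equation directly, since it avoids producing an extra boundary term $y J_m'(y)J_m(y)$ that would later have to be cancelled.

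First I would record the recurrence above, square it, and multiply through by $x$ to obtain
\[
x\,J_{m-1}^2(x) = x\,J_m'^2(x) + 2m\,J_m'(x)J_m(x) + \frac{m^2}{x}J_m^2(x).
\]
The point is that the two outer terms on the right are exactly the integrand appearing on the left-hand side of the claimed formula, while the cross term is a perfect derivative: $2m\,J_m'J_m = m\,(J_m^2)'$. Integrating over $[0,y]$ then gives
\[
\int_0^y x\,J_{m-1}^2\dx{x} = \int_0^y\left(x\,J_m'^2 + \frac{m^2}{x}J_m^2\right)\dx{x} + m\bigl(J_m^2(y) - J_m^2(0)\bigr),
\]
and rearranging this equality yields precisely the statement once we show the boundary contribution at $0$ vanishes.

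The only genuinely delicate point, and the place where the hypothesis $m>0$ is used, is the behaviour at the lower endpoint. Near $x=0$ one has $J_m(x)\sim c\,x^{m}$ and $J_m'(x)\sim c\,m\,x^{m-1}$, so each of the three integrands behaves like $x^{2m-1}$, which is integrable on a neighbourhood of $0$ exactly when $m>0$; this justifies splitting the integral and applying the fundamental theorem of calculus to the cross term. For the same reason $J_m(0)=0$ when $m>0$, so the boundary term collapses to $m\,J_m^2(y)$. Thus I expect the substantive work to lie not in any computation but in this routine endpoint justification, after which the identity follows immediately by rearrangement.
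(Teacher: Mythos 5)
Your proof is correct and follows essentially the same route as the paper's: both rest on the single recurrence $J_{m-1}(x)=J_m'(x)+\tfrac{m}{x}J_m(x)$, the observation that the cross term $2mJ_m'J_m=m(J_m^2)'$ is a perfect derivative, and the vanishing of $J_m$ at the origin for $m>0$. Your explicit justification of integrability near $x=0$ is a point the paper leaves implicit, but the argument is otherwise identical.
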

\begin{proof}
The recurrence $J'_m(x) = J_{m-1}(x)-m/xJ_m(x)$ gives
\begin{align*}
\int_{0}^y J'^2_m(x)x + \frac{m^2}{x}J_m^2(x) \dx{x} &= \int_{0}^y \left( J_{m-1}(x)- \frac{m}{x}J_m(x)\right)^2x + \frac{m^2}{x}J_m^2(x) \dx{x} \\
&= \int_{0}^y J^2_{m-1}(x)x -2mJ_m(x)J'_m(x)\dx{x} .
\end{align*}
Then using the integration by part again we get the equality.
\end{proof}

\begin{Lemma}\cite{Lohoefer1998}\label{legendre}
For real $x \in [-1,1]$ and integer $l,m$ with $1 \leqslant l \leqslant m$, it holds that
\begin{equation*}
\frac{1}{\sqrt{2.22(l+1)}}  < \max_{x \in [-1,1]} \left| P(m,l;x) \right|:=\max_{x \in [-1,1]} \left|\sqrt{\frac{(m-l)!}{(m+l)!}}P_m^l(x)\right|  < \frac{2^{5/4}}{\pi^{3/4}}\frac{1}{l^{1/4}}.
\end{equation*}
There exists an absolute maximum $x_0\in [0,1)$ such that $|P_m^l(x_0)| =  \max\limits_{x\in [-1,1]}|P_m^l(x)| $ satisfying
\begin{equation*}
1 - \frac{(1.11(l+1))^2}{(m+1/2)^2} \leqslant x_0 \leqslant 1- \frac{l^2}{m(m+1)}.
\end{equation*}
\end{Lemma}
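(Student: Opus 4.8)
The plan is to reduce the estimate to standard Sturm–Liouville and turning-point theory for the associated Legendre equation, exploiting the exact $L^2$ normalization of the functions $P(m,l;\cdot)$. First I would set $x=\cos\theta$ and pass to Liouville normal form: with $u(\theta)=\sqrt{\sin\theta}\,P(m,l;\cos\theta)$, the associated Legendre equation becomes $u''+\Phi(\theta)u=0$, where $\Phi(\theta)=(m+\tfrac12)^2-(l^2-\tfrac14)/\sin^2\theta$ (using $m(m+1)+\tfrac14=(m+\tfrac12)^2$). Since $P_m^l(-x)=(-1)^{m-l}P_m^l(x)$, it suffices to treat $\theta\in[0,\pi/2]$, i.e. $x\in[0,1]$. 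The single turning point $\theta^\ast$, defined by $\Phi(\theta^\ast)=0$, i.e. $\sin^2\theta^\ast=(l^2-\tfrac14)/(m+\tfrac12)^2$, separates an evanescent region $\{\Phi<0\}$ from the oscillatory region $\{\Phi>0\}=\{\theta>\theta^\ast\}$, and the global maximum of $|P(m,l;\cdot)|$ will be located just inside the oscillatory side.

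Next, to compare extrema I would introduce the Sonin-type auxiliary function $S(\theta)=u(\theta)^2+(u'(\theta))^2/\Phi(\theta)$ on $\{\Phi>0\}$. A direct computation using $u''=-\Phi u$ gives $S'(\theta)=-\Phi'(\theta)(u'(\theta))^2/\Phi(\theta)^2$, so $S$ is monotone wherever $\Phi$ is monotone. At each critical point $u'=0$, so $S=u^2$ there, and monotonicity forces the values $u^2$ at successive critical points to form a monotone sequence; since the amplitude scales like $\Phi^{-1/4}$, the largest value sits at the extremum adjacent to the turning point. Converting $\cos\theta$ back to the $x$-variable and bracketing the peak location between the turning point and the first interior extremum will produce the claimed interval $[\,1-(1.11(l+1))^2/(m+\tfrac12)^2,\ 1-l^2/(m(m+1))\,]$ for $x_0$, after inserting the elementary inequalities relating $\sin^2\theta^\ast$ to $l^2/(m(m+1))$ and the Airy shift of order $(m+\tfrac12)^{-2/3}$.

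For the size of the maximum, both inequalities come from the uniform Liouville–Green/Airy representation of $u$ near $\theta^\ast$. Writing $u(\theta)\approx A\,\Phi_0(\theta)^{-1/4}\mathrm{Ai}(-\zeta(\theta))$ with $\zeta$ the standard turning-point variable and amplitude $A$ fixed by the normalization $\int_{-1}^1 P(m,l;x)^2\dx{x}=2/(2m+1)$ (equivalently $\|u\|_{L^2(0,\pi)}^2=2/(2m+1)$), the value of $u$ at the principal maximum of $\mathrm{Ai}$ yields the upper bound with rate $l^{-1/4}$ and constant $2^{5/4}/\pi^{3/4}$, while a lower estimate of the same amplitude at the leading lobe yields the lower bound of order $(l+1)^{-1/2}$ with constant $(2.22)^{-1/2}$. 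The $l$-dependence enters through $\Phi_0^{-1/4}$ evaluated at the peak, where $\sin\theta^\ast\sim l/m$ enhances the Airy peak well above the bulk oscillation amplitude $\sim(m+\tfrac12)^{-1/2}$.

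The main obstacle is precisely this third step: the sharp numerical constants $2.22$, $2^{5/4}/\pi^{3/4}$ and $1.11$ are not delivered by the qualitative Sturm/Sonin comparison and require the uniform asymptotic analysis carried out with explicit, non-asymptotic error control valid for every $1\le l\le m$ rather than merely in the limit $m\to\infty$. This is exactly the technical content of \cite{Lohoefer1998}, so I would either invoke that reference directly for the constants or reproduce its Airy-uniformization estimates; the delicate part is passing from the leading Airy behaviour to rigorous two-sided inequalities with controlled remainders, in particular handling the regime $l$ comparable to $m$ where the turning point is no longer close to $\theta=0$.
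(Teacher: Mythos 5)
The paper gives no proof of this lemma: it is quoted verbatim from \cite{Lohoefer1998} and used as a black box, so there is no internal argument to compare against. Your proposal ultimately does the same thing --- you concede that the sharp constants $2.22$, $1.11$ and $2^{5/4}/\pi^{3/4}$ require the explicit, non-asymptotic error control of the cited reference --- which puts you in the same position as the authors; to that extent the proposal is acceptable.

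Two remarks on the sketch you wrap around the citation. First, the Sonin monotonicity argument controls the successive extrema of $u(\theta)=\sqrt{\sin\theta}\,P(m,l;\cos\theta)$, not of $P$ itself; near the turning point one has $\sin\theta^\ast\sim l/(m+\tfrac12)$, which is far from constant when $l\ll m$, so neither the location nor the magnitude of the maximum transfers to $P$ without an extra argument (Lohöfer in fact works with a Sonin--P\'olya function adapted to the untransformed equation for precisely this reason). Second, the upper bracket $x_0\leqslant 1-l^2/(m(m+1))$ in the statement is the turning point of the associated Legendre equation written in the $x$-variable, $\bigl((1-x^2)P'\bigr)'+\bigl(m(m+1)-l^2/(1-x^2)\bigr)P=0$, whereas your $\theta^\ast$ is the turning point of the Liouville normal form with $(m+\tfrac12)^2$ and $l^2-\tfrac14$; these differ at the same order as the quantities being bracketed, so one must say explicitly which equation the bracketing is performed in. Neither point is fatal --- they are exactly the technicalities the reference resolves --- but as written the sketch would not compile into a self-contained proof of the stated two-sided bounds.
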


\subsection{proof of Theorem \ref{oscillating}}
\begin{proof}
The existence of a sequence of transmission eigenfunctions and their surface-localization have been proved in Lemma \ref{31}. We focus on the surface oscillating behaviors of transmission eigenfunctions  here. The transmission eigenfunctions share the same mathematical representations for any given radius $r_0$ and the proof is similar. For simplicity, we assume $r_0 = 1$.

The transmission eigenfunctions $(v_m,w_m)$ in $\bbr^2$ are given in the formula \eqref{eigenfunction2}. The gradient formula in polar coordinates yields
\begin{align*}
\nabla v_m &=  \beta_m \left(kJ'_m(kr)\hat{r} +\frac{\mathrm{i}m}{r}J_m(kr)\hat{\theta}\right)e^{\mathrm{i}m\theta} , \\
\nabla w_m &=  \alpha_m \left(kn_mJ'_m(kn_mr)\hat{r} +\frac{\mathrm{i}m}{r}J_m(kn_mr)\hat{\theta}\right)e^{\mathrm{i}m\theta} ,
\end{align*}
Where $(\hat{r},\hat{\theta})$ is a pair of unit orthonormal vector in polar coordinates.
By using the orthogonality of $\{e^{\mi m \theta}\}_{m \in \bbn}$ in the unit circle, one has
\begin{align*}
\|\nabla v_m\|_{L^2(B_\xi)}^2 =\beta_m^2\int_0^{k\xi}  J'^2_m(r)r +\frac{m^2}{r}J^2_m(r) \dx{r}.
\end{align*}
By using the asymptotic formulas of Bessel functions \eqref{Besselfunction} when $m$ is sufficiently large, it holds that
\begin{align*}
\frac{\|\nabla v_m\|_{L^2(B_\xi)}^2}{\|\nabla v_m\|_{L^2(B_1)}^2} = \frac{\int_0^{k\xi}  J'^2_m(r)r +\frac{m^2}{r}J^2_m(r) \dx{r}}{\int_0^{k}  J'^2_m(r)r +\frac{m^2}{r}J^2_m(r) \dx{r}}  \sim \frac{\int_0^{k\xi} r^{2m-1} \dx{r}}{\int_0^{k} r^{2m-1} \dx{r}} \sim \xi^{2m}\rightarrow 0 \coma m \rightarrow \infty.
\end{align*}
So $\{\nabla v_m\}$ are also surface-localized especially for sufficiently $m$. For $w_m$,
\begin{align*}
\|\nabla w_m\|_{L^2(B_\xi)}^2 &= \alpha_m^2 \int_0^\xi \left( k^2n_m^2J'^2_m(kn_mr) + \frac{m^2}{r^2}J^2_m(kn_mr)\right)r \dx{r} \\
&= \alpha_m^2 \int_0^{kn_m\xi}  J^2_{m-1}(r)r \dx{r} - mJ_m^2(kn_m\xi).
\end{align*}
where we have used the Lemma \ref{besselintegral}.
By following a similar argument as \eqref{wmestimate}, it holds that
\begin{equation*}
\int_0^{kn_m\xi}  J^2_{m-1}(r)r \dx{r} - mJ_m^2(kn_m\xi) \leqslant  \frac{1}{2}J^2_{m-1}(kn_m\xi) (kn_m\xi)^2,
\end{equation*}
and
\begin{equation*}
\begin{aligned}
\int_0^{kn_m}  J^2_{m-1}(r)r \dx{r} &- mJ_m^2(kn_m\xi)  \geqslant \int_0^{m-1}  J^2_{m-1}(r)r \dx{r} - mJ_m^2(m-1) \\
&\geqslant  \frac{1}{2}\frac{(m-1)^2J_{m-1}^3(m-1)}{J_{m-1}(m-1)+2(m-1)J'_{m-1}(m-1)}-  mJ_{m-1}^2(m-1),
\end{aligned}
\end{equation*}
where we have used the fact $J_m(m-1) <J_{m-1}(m-1)$. Hence,
\begin{equation}\label{wmm}
\begin{aligned}
\frac{\|\nabla w_m\|_{L^2(B_\xi)}^2}{\|\nabla w_m\|_{L^2(B_1)}^2} &=  \frac{\int_0^{kn_m\xi}  J^2_{m-1}(r)r \dx{r} - mJ_m^2(kn_m\xi)}{ \int_0^{kn_m}  J^2_{m-1}(r)r \dx{r} - mJ_m^2(kn_m)} \\
&\leqslant \frac{J^2_{m-1}(kn_m\xi) (kn_m\xi)^2}{\frac{(m-1)^2J_{m-1}^3(m-1)}{J_{m-1}(m-1)+2(m-1)J'_{m-1}(m-1)}-  2mJ_{m-1}^2(m-1)} \\
&\leqslant \frac{J^2_{m-1}(kn_m\xi)}{J_{m-1}^2(m-1)}\frac{ (kn_m\xi)^2}{(m-1)^2}\frac{1}{\frac{J_{m-1}(m-1)}{J_{m-1}(m-1)+2(m-1)J'_{m-1}(m-1)}-\frac{2m}{(m-1)^2}}.
\end{aligned}
\end{equation}

From the asymptotic formula for $J_{m-1}(m-1)$ and $J'_{m-1}(m-1)$ (see 9.3.31 in \cite{Abramowitz1988}), we can obtain
\begin{align*}
\frac{J_{m-1}(m-1)}{J_{m-1}(m-1)+2(m-1)J'_{m-1}(m-1)}-\frac{2m}{(m-1)^2}
= (m-1)^{-\frac{2}{3}} \left(C + (m-1)^{-\frac{1}{3}} \right) ,
\end{align*}
where $C$ is some positive constant.
Note that $kn_m\xi <(m-1)$ and following a similar argument as \eqref{wmfinal}, we can readily get
\begin{align*}
\frac{\|\nabla w_m\|_{L^2(B_\xi)}}{\|\nabla w_m\|_{L^2(B_1)}}   &\leqslant C (m-1)^{\frac{2}{3}}Ai((m-1)^\frac{2}{3}\zeta_{\xi}) \leqslant C (m-1)^{\frac{1}{2}} e^{-\frac{2}{3}\zeta_{\xi}^{3/2}(m-1)}  \rightarrow 0 \coma m \rightarrow \infty.
\end{align*}
where $C$ is some constant  only depending on $k,\xi$. Then $\{\nabla w_m\}_{m \in \bbn}$ are also surface-localized.

We now prove the oscillating frequencies of $(v_m,w_m)$ are much higher than the excitation frequency $k$.
The normalized condition $\|v_m\|_{L^2(B_1)} = 1$ shows that
\begin{equation*}
\begin{aligned}
\nabla v_m =&   \beta_m \left(kJ'_m(kr)\hat{r} +\frac{\mathrm{i}m}{r}J_m(kr)\hat{\theta}\right)e^{\mathrm{i}m\theta} \\
=& \frac{1}{\sqrt{2\pi}}\frac{kJ_m(k)}{\sqrt{\int_0^{k}J_m^2(r)r\dx{r}}} \left(k\frac{J'_m(kr)}{J_m(k)}\hat{r} +\frac{\mathrm{i}m}{r}\frac{J_m(kr)}{J_m(k)}\hat{\theta}\right)e^{\mathrm{i}m\theta}
\end{aligned}
\end{equation*}
First, the formula of \eqref{Besselfunction} yields
\begin{equation}\label{zq}
\frac{kJ_m(k)}{\sqrt{\int_0^{k}J_m^2(r)r\dx{r}}} \sim \frac{k^{m+1}}{\sqrt{\frac{1}{2(m+1)}k^{2m+2}}} \sim 2\sqrt{m+1}.
\end{equation}
Moreover, The Bessel function $J_m(x)$ and its derivative  $J'_m(x)$ are strictly monotonously increasing. Then $\nabla v_m$ attains the maximum at $r=1$. By using the asymptotic formula \eqref{Besselfunction} again, it readily holds that
\begin{equation}\label{lower}
\frac{\|\nabla v_m\|_{L^\infty(\Sigma_\xi)}}{k} \geqslant C_{k,s_0} m^{\frac{3}{2}} \rightarrow \infty \quad \text{as} \quad  m \rightarrow \infty.
\end{equation}
It is remarked that the lower bound \eqref{lower} is in fact optimal and $\nabla v_m$ attains the supremum on the boundary $r=1$.

For $\nabla w_m$, the relation between $\alpha_m$ and $\beta_m$ together with the  normalized condition $\|v_m\|_{L^2(B_1)} = 1$ gives
\begin{align*}
\nabla w_m &=  \alpha_m \left(kn_mJ'_m(kn_mr)\hat{r} +\frac{\mathrm{i}m}{r}J_m(kn_mr)\hat{\theta}\right)e^{\mathrm{i}m\theta} \\
&= \frac{1}{\sqrt{2\pi}}\frac{kJ_m(k)}{\sqrt{\int_0^{k}J_m^2(r)r\dx{r}}} \left(kn_m\frac{J'_m(kn_mr)}{J_m(kn_m)}\hat{r} +\frac{\mathrm{i}m}{r}\frac{J_m(kn_mr)}{J_m(kn_m)}\hat{\theta}\right)e^{\mathrm{i}m\theta}.
\end{align*}
The maximum formula of Bessel functions and their derivatives
$$\max_{x\in \bbr} J_m(x) = J_m(j'_{m,1}) \sim m^{-\frac{1}{3}} \coma \max_{x\in \bbr}{J'_m(x)} \sim m^{-\frac{2}{3}}, 
$$
together with \eqref{216}, \eqref{zq} yields
\begin{align*}
\frac{\|\nabla v_m\|_{L^\infty(\Sigma_\xi)}}{k} &= \frac{k}{\sqrt{2\pi}}\frac{e^{im\theta}}{\sqrt{\int_0^{k}J_m^2(r)r\dx{r}}}\frac{J_m(k)}{J_m(kn_m)}\left\|n_mJ_m'(kn_mr)\hat{r}+ \frac{\mathrm{i}m}{rk}J_m(kn_mr)\hat{\theta} \right\|_{L^\infty(\Sigma_\xi)} \\
&\geqslant C_{k,s_0} m^{\frac{3}{2}} \rightarrow \infty \quad \text{as} \quad m \rightarrow \infty.
\end{align*}
It is remarked that  $\nabla w_m$ attains the maximum at  $r_m :=j'_{m,1}/{kn_m}$ where $r_m \rightarrow 1$ as $m\rightarrow \infty$. This is a circle with the radius $r_m$ which is very close but not touch the boundary $r=1$.

The entire proof in three dimensions follows a similar structure to that in two dimensions, with some necessary modifications due to technical calculations. We will outline the required modifications in the subsequent sections.
The corresponding gradient $\nabla v_m$ in spherical coordinate is
\begin{align*}
\nabla v_m^l = \frac{\partial v_m^l}{\partial r}\hat{r} + \nabla_{(\theta,\psi)} v_m^l = \beta_m^l\left( k j'_m(kr)Y_m^l\hat{r} +  j_{m+\frac{1}{2}}(kr) \nabla_{(\theta,\psi)}Y_m^l\right).
\end{align*}
Since ${r}{(m(m+1)})^{-1/2}\nabla_{(\theta,\psi)}Y_m^l$ are a sequence of  orthonormal  vector spherical harmonics, then
\begin{align*}
\|\nabla v_m^l \|^2_{L^2(B_\xi)} =& (\beta_m^l)^2 \left(\int_0^\xi j'^2_m(kr)k^2r^2 + m(m+1)j^2_{m}(kr) \dx{r}\right) \\
&=\frac{(\beta_m^l)^2}{k} \left(\int_0^{k\xi} j'^2_m(r)r^2 + m(m+1)j^2_{m}(r) \dx{r}\right) .
\end{align*}
Then
\begin{align*}
\frac{\|\nabla v_m^l \|^2_{L^2(B_\xi)}}{\|\nabla v_m^l \|^2_{L^2(B_1)}} &= \frac{\int_0^{k\xi} j'^2_m(r)r^2 + m(m+1)j^2_{m}(r) \dx{r}}{\int_0^{k} j'^2_m(r)r^2 + m(m+1)j^2_{m}(r) \dx{r}}\\
&\sim \frac{\int_0^{k\xi}r^{2m}\dx{r}}{\int_0^{k}r^{2m}\dx{r}} \sim \xi^{2m+1}\rightarrow 0 \quad \text{as} \quad m \rightarrow \infty.
\end{align*}
Similarly, the representation of $\nabla w_m^l$ is
\begin{align*}
\nabla w_m^l = \frac{\partial w_m^l}{\partial r}\hat{r} + \nabla_{(\theta,\psi)} w_m^l = \alpha_m^l\left( kn_m j'_m(kn_mr)Y_m^l\hat{r} +  j_{m}(kn_mr) \nabla_{(\theta,\psi)}Y_m^l\right).
\end{align*}
The $L^2$-norm of $\nabla w_m^l$ is given by
\begin{align*}
\|\nabla w_m^l \|^2_{L^2(B_\xi)} =& (\alpha_m^l)^2 \left(\int_0^\xi j'^2_m(kn_mr)k^2n_m^2r^2 + m(m+1)j^2_{m}(kr) \dx{r}\right) \\
&= \frac{\pi}{2}\frac{(\alpha_m^l)^2}{kn_m} \left(\int_0^{kn_m\xi} J'^2_{m+\frac{1}{2}}(r)r + \frac{(m+\frac{1}{2})^2}{r}J^2_{m+\frac{1}{2}}(r) \dx{r} - \frac{1}{2}J^2_{m+\frac{1}{2}}(kn_m\xi)\right) \\
&=\frac{\pi}{2}\frac{(\alpha_m^l)^2}{kn_m} \left( \int_0^{kn_m\xi} J^2_{m-\frac{1}{2}}(r)r\dx{r} - (m+1)J^2_{m+\frac{1}{2}}(kn_m\xi)  \right).
\end{align*}
By using the result in \eqref{wmm}, it holds that
\begin{align*}
\frac{\|\nabla w_m^l \|^2_{L^2(B_\xi)}}{\|\nabla w_m^l \|^2_{L^2(B_1)}} &=\frac{\int_0^{kn_m\xi} J^2_{m-\frac{1}{2}}(r)r\dx{r} - (m+1)J^2_{m+\frac{1}{2}}(kn_m\xi)}{\int_0^{kn_m} J^2_{m-\frac{1}{2}}(r)r\dx{r} - (m+1)J^2_{m+\frac{1}{2}}(kn_m\xi)} \\
&\leqslant  \frac{\int_0^{kn_m\xi} J^2_{m-\frac{1}{2}}(r)r\dx{r} - (m+1)J^2_{m+\frac{1}{2}}(kn_m\xi)}{\int_0^{m -\frac{1}{2}} J^2_{m-\frac{1}{2}}(r)r\dx{r} - (m+1)J^2_{m-\frac{1}{2}}(m-\frac{1}{2})} \\
&= \frac{ J^2_{m-\frac{1}{2}}(kn_m\xi)}{J_{m-\frac{1}{2}}^2(m-\frac{1}{2})}\frac{(kn_m\xi)^2}{(m-\frac{1}{2})^2}\frac{1}{\frac{J_{m-\frac{1}{2}}(m-\frac{1}{2})}{J_{m-\frac{1}{2}}(m-\frac{1}{2})+2(m-\frac{1}{2}) J'_{m-\frac{1}{2}}(m-\frac{1}{2})} - 2 \frac{m+1}{(m-\frac{1}{2})^2}} \\
&\longrightarrow 0 \quad \text{as} \quad m \rightarrow \infty.
\end{align*}

In order to prove the oscillating frequencies of $(v_m^l,w_m^l)$ are much higher than the excitation frequency $k$, we need a clearer representation for the transmission eigenfunction. First,  The normalized spherical harmonics $Y_m^l$ has the explicit expression,
\begin{equation*}
Y_m^l(\theta,\varphi) = \sqrt{\frac{2m+1}{4\pi}\frac{(m-|l|)!}{(m+|l|)!}}P_m^{|l|}(\cos \theta) e^{\mathrm{i}l\varphi} \coma \theta \in [0,\pi] , \varphi \in [0,2\pi) , -m\leqslant l \leqslant m,
\end{equation*}
where $P_m^{|l|}$ are the associated Legendre polynomials.  The normalized condition $\|v_m^l\|_{L^2(B_1)}=1$ gives
\begin{equation*}
\beta_m^l =  \frac{1}{\sqrt{\int_0^1 j^2_{m}(kr)r^2 \dx{r}}}  = \frac{2}{\pi}\frac{k}{\sqrt{\int_0^k J^2_{m+\frac{1}{2}}(r)r \dx{r}}}.
\end{equation*}
Moreover,  the corresponding gradient $\nabla v_m^l$ in spherical coordinates is
\begin{equation}\label{nablavm}
\begin{aligned}
\nabla v_m^l =& \frac{\partial v_m^l}{\partial r}\hat{r} +\frac{1}{r} \frac{\partial v_m^l}{\partial \theta}\hat{\theta} + \frac{1}{r \sin \theta} \frac{\partial v_m^l}{\partial \varphi} \hat{\varphi} \nonumber\\
=&\beta_m^l\sqrt{\frac{2m+1}{4\pi}\frac{(m-|l|)!}{(m+|l|)!}}r^{-3/2}e^{\mathrm{i}l\varphi}
\left[ \left(-\frac{1}{2}J_{m+\frac{1}{2}}(kr)+krJ'_{m+\frac{1}{2}}(kr) \right) P_m^{|l|}(\cos \theta)\hat{r} \right.  \nonumber\\
& \left.  + J_{m+\frac{1}{2}}(kr)\frac{\dx{P_m^{|l|}(\cos \theta)}}{\dx{\theta}}\hat{\theta} + J_{m+\frac{1}{2}}(kr)P_m^{|l|}(\cos \theta)\frac{\mi l}{\sin \theta}\hat{\varphi} \right]  \nonumber\\
:=& \beta_m^lJ_{m+\frac{1}{2}}(k)\sqrt{\frac{2m+1}{4\pi}} \left(I_1\hat{r} +I_2\hat{\theta} + I_3\hat{\varphi} \right),
\end{aligned}
\end{equation}

where $\hat{r} , \hat{\theta} ,  \hat{\varphi}$ are the orthonormal unit vectors in spherical coordinates.



Next, we need to estimate $I_i,i=1,2,3$ in the above formula. By using Lemma \ref{legendre}, we first have
\begin{equation}\label{i1}
\begin{aligned}
\sup_{x \in \Sigma_\xi}I_1&= \sup_{\bx \in  \Sigma_\xi} \sqrt{\frac{(m-|l|)!}{(m+|l|)!}}\frac{1}{J_{m+\frac{1}{2}}(k)} \left| r^{-\frac{3}{2}}e^{\mathrm{i}l\varphi}\Big(krJ'_{m+\frac{1}{2}}(kr)-\frac{1}{2}J_{m+\frac{1}{2}}(kr)\Big)P_m^{|l|}(\cos \theta) \right|\\
&\geqslant  C\frac{m+1/2}{(|l|+1)^{1/2}} \coma |l| \geqslant 1,
\end{aligned}
\end{equation}
where we used the asymptotic formula of Bessel functions and its derivatives for large order.
It is clear that \eqref{i1} also holds for $l=0$ since the Legendre polynomials $\max_{x \in [-1,1]} P_m(x) =1$. For $I_3$, we shall calculate the value at maximum point $x_0$ to get the lower bound. Let $x_0 =\cos\theta_0$, it holds from Lemma \ref{legendre} that
\begin{equation*}
\sin\theta_0 = \sqrt{1-x_0^2} \leqslant \frac{1.11(|l|+1)}{m+1/2} \coma 1 \leqslant |l|\leqslant m.
\end{equation*}
Then
\begin{equation}\label{i3}
\begin{aligned}
\sup_{\bx \in \Sigma_\xi} I_3 &\geqslant \sup_{x \in \Sigma_\xi} \sqrt{\frac{(m-|l|)!}{(m+|l|)!}}\frac{1}{J_{m+\frac{1}{2}}(k)} \left| r^{-\frac{3}{2}}e^{\mathrm{i}l\varphi}J_{m+\frac{1}{2}}(kr)(\sin \theta_0)^{-1}lP_m^{|l|}(\cos \theta_0)\right| \\
&\geqslant C\frac{m+1/2}{(|l|+1)^{1/2}}\frac{|l|}{|l|+1} \coma |l| \geqslant 1,
\end{aligned}
\end{equation}
and $\sup_{\bx \in \Sigma_\xi} I_3 =0$ when $l=0$.

We shall estimate $I_2$ in several cases. When $2 \leqslant |l| \leqslant m-1$, we estimate the upper bound of the derivative of the associated Legendre polynomial. It holds that
\begin{equation}\label{a1}
\begin{aligned}
\sup_{\theta \in [0,\pi]} &\left|\sqrt{\frac{(m-|l|)!}{(m+|l|)!}}\frac{\dx{P_m^{|l|}(\cos \theta)}}{\dx{\theta}}\right| =  \sup_{\theta \in [0,\pi]} \left|\sqrt{\frac{(m-|l|)!}{(m+|l|)!}}\frac{\dx{P_m^{|l|}(\cos \theta)}}{\dx{(\cos \theta})} \sin \theta \right| \\
&=  \sup_{\theta \in [0,\pi]}\frac{1}{2}\sqrt{\frac{(m-|l|)!}{(m+|l|)!}}\left|(m+|l|)(m-|l|+1)P_{m}^{|l|-1}(\cos \theta) - P_{m}^{|l|+1}(\cos\theta) \right| \\
&=\sup_{\theta \in [0,\pi]}\sqrt{\frac{(m-|l|)!}{(m+|l|)!}}\left|\frac{|l|\cos \theta}{\sin \theta}P_m^{|l|}(\cos \theta) -  P_{m}^{|l|+1}(\cos\theta) \right| \\
&\leqslant \sup_{\theta \in [0,\pi]}\sqrt{\frac{(m-|l|)!}{(m+|l|)!}}\left|\frac{|l|}{\sin \theta}P_m^{|l|}(\cos \theta) \right| +C (m+\frac{1}{2}) \sup_{\theta \in [0,\pi]} \left| P(m,|l|;\cos \theta)\right|.
\end{aligned}
\end{equation}
where we have used the fact  that the maximum of $P(m,|l|+1;\cos \theta)$ is uniformly less than that of $P(m,|l|;\cos \theta)$ with respect to $m$ and $l$ in the last line \cite{Lohoefer1998}.
For other cases $|l|=m,1\; \textup{and} \; 0$, straight calculations show that
\begin{align}
\sup_{\theta \in [0,\pi]} \left|\sqrt{\frac{1}{(2m)!}}\frac{\dx{P^m_m(\cos \theta)}}{\dx{\theta}}\right| &= \sqrt{2m}\sup_{\theta \in [0,\pi]}\left| P(m,m-1;\cos \theta) \right| \coma \label{a15}  \\
\sup_{\theta \in [0,\pi]} \sqrt{\frac{(m-1)!}{(m+1)!}}\left|\frac{\dx{P^1_m(\cos \theta)}}{\dx{\theta}}\right| &= \sup_{\theta \in [0,\pi]} \left|P_m(\cos \theta) - \sqrt{(m+2)(m-1)}P(m,2;\cos \theta)\right|.\label{a2}
\end{align}
Comparing the formulas \eqref{a1},\eqref{a15}, \eqref{a2} with the formula \eqref{i3}, one can find that the upper bound of $I_2$ can be uniformly controlled by that of $I_3$ when $|l| \geqslant 1$, i.e.
\begin{equation} \label{i2}
\sup_{x \in \Sigma_\xi} I_2 =\sup_{\bx \in \Sigma_\xi} \sqrt{\frac{(m-|l|)!}{(m+|l|)!}}\frac{1}{J_{m+\frac{1}{2}}(k)} \left| r^{-3/2}e^{\mathrm{i}l\varphi}J_{m+\frac{1}{2}}(kr)\frac{\dx{P_m^{|l|}(\cos \theta)}}{\dx{\theta}}\right| \leqslant C \sup_{\bx \in \Sigma_\xi} I_3  .
\end{equation}
While $l=0$, it holds that
\begin{equation}\label{i4}
\sup_{\theta \in [0,\pi]} \left|\frac{\dx{P_m(\cos \theta)}}{\dx{\theta}}\right| = \sqrt{m(m+1)} \sup_{\theta \in [0,\pi]} \left|P(m,1;\cos \theta) \right|.
\end{equation}
Combining \eqref{nablavm} , \eqref{i1}, \eqref{i3}, \eqref{i2} \eqref{i4}, one can deduce that
\begin{align*}
\frac{\left\|\nabla v_m^l\right\|_{L^\infty(\Sigma_\xi)}}{k} &=\frac{J_{m+\frac{1}{2}}(k)}{\sqrt{\int_0^1 J_{m+\frac{1}{2}}(kr)r \dx{r}}}\sqrt{\frac{2m+1}{4\pi k^2}}\left\|\sqrt{(I_1)^2 +(I_2)^2 + (I_3)^2} \right\|_{L^\infty(\Sigma_2)} \\
&\geqslant C\frac{(m+1/2)^2}{(|l|+1)^{1/2}} \geqslant C(m+\frac{1}{2})^{\frac{3}{2}} \rightarrow \infty.
\end{align*}
when $m \rightarrow \infty$. In fact, $\nabla v_m^l$ attains the supremum on the boundary sphere $r =1$.

From the relation between $\alpha_m^l$ and $\beta_m^l$, we can obtain
\begin{equation*}
\alpha_m^l = \frac{j_m(k)}{j_m(kn_m)}\beta_m^l = \sqrt{n_m}\frac{2}{\pi}\frac{J_{m+\frac{1}{2}}(k)}{J_{m+\frac{1}{2}}(kn_m)}\frac{k}{\sqrt{\int_0^k J^2_{m+\frac{1}{2}}(r)r \dx{r}}}.
\end{equation*}
We can get the coefficient of $w_m^l$ through the relation of $\alpha_m^l$ and $\beta_m^l$. Then the gradient of $w_m^l$ in spherical coordinates is
\begin{align*}
\nabla w_m^l =& \alpha_m^l \sqrt{\frac{2m+1}{4\pi}\frac{(m-|l|)!}{(m+|l|)!}}r^{-3/2}e^{il\varphi}  \Bigg[ \Big( kn_mrJ'_{m+\frac{1}{2}}(kn_mr) -\frac{1}{2}J_{m+\frac{1}{2}}(kn_mr) \Big)P_m^{|l|}(\cos \theta)\hat{r}
\nonumber \\
&  + J_{m+\frac{1}{2}}(kn_mr)\frac{\dx{P_m^{|l|}(\cos \theta)}}{\dx{\theta}}\hat{\theta} +  J_{m+\frac{1}{2}}(kn_mr)P_m^{|l|}(\cos \theta)\frac{\mi l}{\sin \theta}\hat{\varphi} \Bigg] \nonumber\\
&:=\alpha_m^l\sqrt{\frac{2m+1}{4\pi}}\left(L_1\hat{r} +L_2\hat{\theta} + L_3\hat{\varphi} \right).
\end{align*}
 We note the fact
\begin{equation*}
kn_m \in (j_{{m+\frac{1}{2}},s_0},j_{{m+\frac{1}{2}},s_0+1}) = {m+\frac{1}{2}} + a_{s_0}({m+\frac{1}{2}})^{\frac{1}{3}} + \mathcal{O}(({m+\frac{1}{2}})^{-\frac{1}{3}}) >j_{{m+\frac{1}{2}},1} > j'_{{m+\frac{1}{2}},1},
\end{equation*}
where $a_{s_0}$ is some positive constant. Then 
$$\max_{x\in \bbr} J_{m+\frac{1}{2}}(x) = J_{m+\frac{1}{2}}(j'_{{m+\frac{1}{2}},1}) \sim ({m+\frac{1}{2}})^{-\frac{1}{3}} \coma \quad  \max_{x\in \bbr}{J'_{m+\frac{1}{2}}(x)} \sim (m+\frac{1}{2})^{-\frac{2}{3}}. 
$$
 Hence,  we can conduct similar estimates as $I_i$ for $L_i,i=1,2,3$ to get the lower gradient bound and
\begin{equation*}
\frac{\left\|\nabla w_m^l\right\|_{L^\infty(\Sigma_\xi)}}{k} \geqslant C\frac{(m+1/2)^{13/6}}{(|l|+1)^{1/2}}  \geqslant C (m +\frac{1}{2})^{\frac{5}{3}} \rightarrow \infty  \quad \textit{as} \quad m \rightarrow \infty,
\end{equation*}
where $C$ is a constant independent of $m$ and $l$.
\end{proof}

\section*{Acknowledgment}
We are grateful to  Huaian Diao and Yan Jiang  for their insightful
comments during discussions about this work.

\newpage
\bibliographystyle{abbrv}
\bibliography{cite}


\end{document}